\newtheorem{theorem}{Theorem}
[section]
\newtheorem{corollary}{Corollary}[section]
\newtheorem{lemma}{Lemma}[section]
\newtheorem*{ThV}{Varadarajan's Theorem}
\theoremstyle{definition}
\newcommand{\eps}{\varepsilon}
\newtheorem*{rem}{Remark}
\numberwithin{equation}{section}
\pgfplotsset{compat=1.15} 
\pgfplotsset{soldot/.style={only marks,mark=*, line width=0.2pt, mark size=1pt}}
\begin{document}


\baselineskip=17pt


\title[Uniqueness sets with angular density]{Uniqueness sets with angular density for spaces of entire functions, I. Basics }

\author[A. Kononova]{Anna Kononova} 

\address{\parbox{\textwidth}{
School of Mathematical Sciences\\
Holon Institute of Technology\\
Holon 5810201, Israel
}}

\email{anya.kononova@gmail.com}

\date{\today}

\begin{abstract}
    This is the first part of our work which is devoted to the uniqueness sets for spaces of entire functions.  In this part we consider a set $\Lambda$ with angular density with respect to the order $\rho>0,$ satisfying the Lindel\"of condition. We find the value of the critical zero set type for $\Lambda$  in geometrical terms. 
We give a necessary and sufficient condition for the coincidence of the critical zero set type and the critical uniqueness set type.     
    
    At the end of the paper we present an application of our results to random zero sets in Fock-type spaces.
    
\end{abstract}


\subjclass[2020]{Primary 30D15.}

\keywords{entire functions, zero distribution, uniqueness set}

\maketitle

\section{Introduction and main results}
This is the first part of our work which consists of three parts. These parts are essentially independent of each other.

Let $ \mathcal E_{\rho, \sigma}$ be the class of entire functions of order $\rho$ and of type not exceeding  $\sigma$. The latter means that, for each $\eps>0$ exists $C_\eps$ such that 
$$|f(z)|\le C_\eps e^{(\sigma+\eps)|z|^\rho}, \;\;\; z\in\mathbb C.$$

Let $\Lambda\subset \mathbb C $ be a discrete set,  and $\mathcal A$ be some class of entire functions. We say that $\Lambda$ is a {\bf uniqueness set for $\mathcal A$}, if
$$ f\in \mathcal A, \left.f\right|_\Lambda=0\;\;\Rightarrow\;\;f\equiv0,$$
where a zero of multiplicity $m$ is counted $m$ times.

{\it
Given $\rho>0$ and a discrete set $\Lambda\subset \mathbb C$, we let
\begin{align*}
    \sigma_U(\Lambda)
    =&\sup\{\sigma:\Lambda \text{ \;is\; a \;uniqueness\; set \;for\; }\mathcal E_{\rho, \sigma}\}\\
    =&\inf\{\sigma: \exists f\in \mathcal E_{\rho, \sigma}\setminus\{0\}: \;f|_\Lambda=0
    \}.
    \end{align*}
We call this quantity the {\bf critical uniqueness type of $\Lambda$}.
}

One of the reasons for the importance of this quantity is that, by a classical Markushevich duality  argument   \cite[Lecture\ 3, Sect.~4]{LL}, for $\rho=1$, the uniqueness of $\Lambda$ in $\mathcal E_{1, \sigma}$ is equivalent to the completeness of the system    of exponential functions $ E_\Lambda=\{e^{\overline \lambda z}:\lambda\in\Lambda\}$ in the space of holomorphic functions in the disk  $\sigma \mathbb D=\{\zeta:|\zeta|<\sigma$\} with topology of locally uniform convergence.

The importance of the case $\rho=2$ is due to the fact that the exponential functions $e_w(z)=e^{z\overline w}$ are the reproducing kernels of the classical Fock-Bargmann space $\mathcal B$ of entire functions  satisfying 
$$\|f\|^2=\frac{1}{\pi}\int_{\mathbb C}|f(z)|^2 e^{-|z|^2 }{\rm d}m(z)<\infty,$$
where $m$ is the planar Lebesgue measure. That is, $f(w)=\langle f,e_w\rangle$ for $f\in\mathcal B.$ In this case, $E_\Lambda$ is complete in the Fock-Bargmann space provided that  $\sigma_U(\Lambda)>1/2,$ and incomplete for $\sigma_U(\Lambda)<1/2.$

At present, in spite of many efforts \cite{Azarin, Azarin-Giner, Khabibullin1}, explicit expressions, which would allow one to {\it compute} $\sigma_U(\Lambda),$ are known only in  very few cases. In this work we consider this problem only for regularly distributed sets $\Lambda$.

Note that our interest in this problem arose from the study of the uniqueness property in Fock-type spaces of a random set of points, which will be discussed in Section \ref{Fock}.

\begin{rem}
    Given $p> 0$ and a set $M\subset [0,2\pi p],$ we always consider it as a subset of the factor space $\mathbb R/ (2\pi p\mathbb Z).$
\end{rem}

We say that the set $\Lambda$ has an angular density (with respect to the order $\rho$), if for all $0\le\alpha<\beta\le 2\pi,$ except, maybe, countably many values, there exists the limit
$$\Delta_\Lambda(\alpha, \beta)=\lim_{R\to\infty}\frac{n_\Lambda(R;\alpha,\beta)}{R^\rho},$$
where  $n_\Lambda(R;\alpha, \beta)=\#\{\lambda_k\in\Lambda:|\lambda_k|< R, \arg \lambda_k\in(\alpha,\beta)\}.$
The quantity $\Delta_\Lambda(\alpha, \beta)$ is called the {\bf angular density} of $\Lambda.$ We will treat it as a non-negative measure on $[0,2\pi],$ which will also be denoted by $\Delta_\Lambda.$

A set $\Lambda$ is called {\bf $\rho$-regular} if 
\begin{itemize}
    \item [ (i)] it has an angular density with respect to the order $\rho$;
    \item[(ii)] if $\rho$ is an integer, then in addition, the Lindel\"of condition holds:
    \begin{equation}    \label{Lind}        \lim_{R\to\infty}\sum_{0<|\lambda|\le R}\frac1{\lambda^{\rho}}\;\;\;{\rm exists\; and\;is\;finite}.\end{equation}
\end{itemize}

    Note that condition (ii) yields that the measure $\Delta_\Lambda$ has zero $\rho$-th moment
  \begin{equation}\label{moment}
      \int_0^{2\pi} e^{i\rho \theta} {\rm d} \Delta_\Lambda (\theta)=0\end{equation}
(otherwise, the sum in (\ref{Lind}) would grow as $\log R$ for $R\to\infty$).  

On the other hand, given a finite measure $\Delta$ satisfying \eqref{moment}, we can always construct \cite[Chapter II, Section 4]{Levin} a $\rho$-regular set $\Lambda$ such that $\Delta_\Lambda=\Delta$. 	  

For $\rho$-regular sets $\Lambda$, the Levin-Pfluger theory of entire functions of completely regular growth \cite{Levin} (for a streamlined approach to that theory see~\cite{Azarin79}) reduces the problem of determining the value of $\sigma_U(\Lambda)$ to a question about $\rho$-trigonometrically convex functions.

In this part, we always assume that the set $\Lambda$ is $\rho$-regular.

A function $h:[\alpha,\beta]\to\mathbb R$ is called {\bf $\rho$-trigonometrically convex} if, for any $\alpha\le\theta_1\le\theta\le\theta_2\le\beta$, $\;\theta_2-\theta_1<\frac\pi \rho,$ we have
$$h(\theta)\le \frac{h(\theta_1)\sin\rho(\theta_2-\theta)+h(\theta_2)\sin\rho(\theta-\theta_1)}{\sin\rho(\theta_2-\theta_1)}.$$

By $TC_\rho$ we denote the class of $\rho$-trigonometrically convex functions. For integer~$\rho,$
$$T_\rho=\{A\cos\rho\theta+B\sin\rho\theta: A,B\in\mathbb R\}\subset TC_\rho$$
is a subclass of   $\rho$-trigonometric functions.

For fundamental properties of $\rho$-trigonometrically convex functions see, for instance, \cite[Chapter\ I, Sect.~16]{Levin} and \cite[Lecture\ 8]{LL}.

With each measure $\Delta$ satisfying the property \eqref{moment}, we associate a $2\pi$-periodic $\rho$-trigonometrically convex function $h_\Delta$ such that 
\begin{equation}
\label{h''}h_\Delta''+\rho^2 h_\Delta=2\pi\rho\Delta
\end{equation}
in the sense of distribution. For a non-integer $\rho$, the function $h_\Delta$ is uniquely defined, while for an integer $\rho$, the function $h_\Delta$ is defined up to a $\rho$\hyp{trigonometric} term $k\in T_\rho$. Note that in the latter case the moment condition (\ref{moment}) guarantees $2\pi$-periodicity of $h_\Delta.$

{
There are explicit expressions for $h_\Delta$ in terms of $\Delta$  \cite[Chapter II, Theorem 1, Theorem 2]{Levin}. Given a measure $\Delta$, we define $h_\Delta$ as follows:
$$
\hspace{0.5cm}
h_\Delta(\theta)=
\left\{
\begin{array}{lr}\displaystyle
    \frac{\pi}{\sin\pi\rho}\int_{\theta-2\pi}^\theta\cos\rho(\theta-\varphi-\pi){\rm d}\Delta(\varphi), & \;\;\;  \rho {\notin\mathbb N},\hspace{2cm}
\vspace{3pt}
    \\
      \displaystyle -\int_{\theta-2\pi}^\theta{(\varphi-\theta)}\sin\rho(\varphi-\theta){\rm d}\Delta(\varphi),&  \;\;\;\rho {\in\mathbb N.}\hspace{2cm}
    \end{array}
\right.
$$

Note that, unlike the formulas from \cite{Levin}, we  omit the $\rho$\hyp{trigonometric} term in the case $\rho\in\mathbb N$, so that for each $A,\tau\in\mathbb R,$ the function $h_\Delta(t)+A\cos\rho(t-\tau)$ also corresponds to the measure $\Delta$ and satisfies the condition \eqref{h''}.

}

Given a $\rho$-regular set $\Lambda$ with angular distribution $\Delta_\Lambda$
 we also use the notation $h_\Lambda=h_{\Delta_\Lambda}$.

We call the set $\Lambda$ the {\bf zero set} for some class  of analytic functions $\mathcal A$ if there exists a function $f\in \mathcal A $ such that $f^{-1}\{0\}=\Lambda.$

{\it Given $\rho>0$ and a discrete set $\Lambda\subset \mathbb C$, we define the {\bf critical zero set type} $\sigma_Z(\Lambda)$ by
$$\sigma_Z(\Lambda)=\inf\left\{\sigma: \exists f\in \mathcal E_{\rho,\sigma}:\;f^{-1}\{0\}=\Lambda\right\}.$$
}

Clearly,
\begin{equation}\label{sigma}
    \sigma_U(\Lambda)\le\sigma_Z(\Lambda).\end{equation}
Unlike $ \sigma_U(\Lambda),$ the quantity  $\sigma_Z(\Lambda)$ is easy to compute.

The following Theorem \ref{T1} is a  consequence of the Levin-Pfluger theory of entire functions of completely regular growth \cite{Levin}.
\renewcommand{\theenumi}{(\roman{enumi})} 
\renewcommand{\labelenumi}{\theenumi} 

\begin{theorem}
    \label{T1}
    Suppose $\Lambda$ is a $\rho$-regular set with angular density $\Delta$.
    \begin{enumerate}
        \item [ (i)] If $\rho$ is non-integer, then
        $$\sigma_Z(\Lambda)=\max_{t\in[0,2\pi]}h_\Delta(t).$$
        \item [ (ii)] If $\rho$ is integer, then
        \begin{equation}\label{maxmin}
            \sigma_Z(\Lambda)=\min_{k\in T_\rho}\max_{t\in[0,2\pi]}[h_\Delta(t)+k(t)].
        \end{equation}
    \end{enumerate}
\end{theorem}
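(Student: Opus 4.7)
The plan is to deduce the theorem from two pillars of the Levin-Pfluger theory of entire functions of completely regular growth: the existence of a well-behaved canonical product with zero set $\Lambda$, and the Hadamard factorization of an arbitrary entire function of order $\rho$ vanishing on $\Lambda$. Together these supply both the upper bound (an explicit function realizing $\sigma_Z(\Lambda)$) and the matching lower bound.

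First I would introduce the canonical product
$$P(z)=\prod_{\lambda\in\Lambda}E(z/\lambda,q),$$
where $E(\cdot,q)$ is the Weierstrass primary factor and $q$ is the appropriate genus ($q=\lfloor\rho\rfloor$ for non-integer $\rho$, and $q=\rho$ for integer $\rho$). For $\rho$-regular sets, the Levin-Pfluger theorem asserts that $P$ is of completely regular growth of order $\rho$ with indicator precisely $h_\Delta$: uniquely in the non-integer case, and modulo a $T_\rho$-summand (dictated by the normalization of $P$) in the integer case, where \eqref{Lind} enters through \eqref{moment} to enforce $2\pi$-periodicity of $h_\Delta$. Since the type of a function of completely regular growth equals the maximum of its indicator, this already delivers $\sigma_Z(\Lambda)\le\max_t h_\Delta(t)$ when $\rho\notin\mathbb N$, with $P$ itself as witness.

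For integer $\rho$, I would then consider functions $P(z)e^{Q(z)}$ with $Q$ a polynomial of degree at most $\rho$. Monomials of $Q$ of degree strictly less than $\rho$ contribute nothing to the order-$\rho$ indicator, whereas a leading term $a_\rho z^\rho$ adds the $\rho$-trigonometric function $\mathrm{Re}(a_\rho e^{i\rho\theta})$. As $a_\rho$ varies over $\mathbb C$, this added term ranges over all of $T_\rho$, so choosing $Q$ to minimize the maximum of $h_\Delta+k$ yields the upper bound in \eqref{maxmin}. Conversely, if $f\in\mathcal E_{\rho,\sigma}$ has exact zero set $\Lambda$, Hadamard's factorization writes $f=P\cdot e^Q$ with $\deg Q\le\rho$, and the same accounting identifies the indicator of $f$ with $h_\Delta+k$ for some $k\in T_\rho$, with $k\equiv 0$ forced when $\rho\notin\mathbb N$ because then $\deg Q<\rho$. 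Taking the maximum over $t$ gives the matching lower bound.

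The main obstacle is the invocation of the Levin-Pfluger theorem itself: proving that the canonical product of a $\rho$-regular set is of completely regular growth with indicator exactly $h_\Delta$ rests on delicate uniform estimates for angular averages of $\log|P|$, controlled by the existence of $\Delta_\Lambda(\alpha,\beta)$ off a countable set of endpoints and, for integer $\rho$, by the Lindel\"of condition \eqref{Lind}. Once this is granted, the remaining arithmetic — in particular the bookkeeping modulo $T_\rho$ in the integer case — is routine.
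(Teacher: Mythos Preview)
Your proposal is correct and follows essentially the same route as the paper: canonical product with zero set $\Lambda$, Levin--Pfluger to identify its indicator with $h_\Delta$ (modulo $T_\rho$ when $\rho\in\mathbb N$), Hadamard factorization for the lower bound, and variation of the leading exponential coefficient for the upper bound in the integer case. The only point the paper makes explicit that you leave implicit is a brief compactness argument showing the infimum over $k\in T_\rho$ is actually attained, so that \eqref{maxmin} is a genuine $\min$.
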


For $\rho=1,$ the right-hand side of (\ref{maxmin}) has a simple geometric meaning \cite{Azarin-Giner, Khabibullin2}.
In this case,  a trigonometrically convex function $h_\Delta$ is a supporting function of a planar convex  compact set $I_\Delta$. Adding a trigonometric function 
$$A\cos\theta+B\sin\theta={\rm Re} (Ce^{i\theta}),$$
we shift $I_\Delta$ by $Ce^{i\theta}$. Thus, in this case, the right-hand side of (\ref{maxmin}) equals the circumradius $R_\Delta$ of the  set $I_\Delta$, while the needed translation vector $Ce^{i\theta}$ is uniquely  characterized by the condition that the origin lies in the convex hull of the set 
$$\{e^{i\theta}:\;\;h_\Delta(\theta)+{\rm Re}(Ce^{i\theta})=R_\Delta\}.$$

A similar interpretation of the right-hand side of (\ref{maxmin}) holds for all integer values of $\rho.$
Given $h\in TC_\rho$, 
we associate $\Delta$
with it via \eqref{h''}.  Assuming that $\rho$ 
is integer, we let
$$h^*(\theta)=\max_{j=0,1,\ldots, \rho-1} h\left(\frac {\theta+2\pi j}{\rho}\right),$$
and observe that the function $h^*$ is $1$-trigonometrically convex. By $I_\Delta^*$ we denote the planar convex compact set, having supporting function $h^*$. By $R_\Delta^*$ we denote the circumradius of $I_\Delta^*.$

Set $M_h=
\{\theta:\;\;h(\theta)=\displaystyle
\max_{t\in[0,2\pi]}h(t)\}$. 

By ${\rm conv}X,$ we denote the convex hull of a plane set X.

\begin{theorem}
\label{T2}
    Let $\rho$ be a positive integer, and let $h\in TC_\rho.$

    \begin{enumerate}
        \item [A.] The following two conditions are equivalent:
        \begin{enumerate}
            \item[(i)] $0\in { \rm conv}\{e^{it\rho}: t\in M_h\};$
            \item [(ii)]$\displaystyle\min_{k\in T_\rho}\;\;\max_{t\in[0,2\pi]}\bigl(h(t)+k(t)\bigr)=\max_{t\in[0,2\pi]} h(t).$
        \end{enumerate}
        \item [B.] $\displaystyle\min_{k\in T_\rho}\max_{t\in[0,2\pi]}\bigl(h(t)+k(t)\bigr)=R_\Delta^*.$
    \end{enumerate}
\end{theorem}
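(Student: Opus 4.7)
The plan is to reduce the minimax problem $\min_{k \in T_\rho}\max_t(h(t)+k(t))$ to a one-dimensional minimax involving the supporting function $h^*$ of $I_\Delta^*$, and then to invoke a convex-separation argument for the equivalence in Part~A.

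The key identity is the following. Every $k \in T_\rho$ has the form $k(t) = \mathrm{Re}(Ce^{i\rho t})$ for some $C \in \mathbb{C}$, and since $e^{i(\theta + 2\pi j)} = e^{i\theta}$, the value $k((\theta+2\pi j)/\rho)$ is independent of $j$. Consequently $(h+k)^*(\theta) = h^*(\theta) + \mathrm{Re}(Ce^{i\theta})$. Writing each $t \in [0,2\pi)$ as $(\theta + 2\pi j)/\rho$ with $\theta \in [0,2\pi)$ and $j \in \{0,\dots,\rho-1\}$ also yields $\max_t h(t) = \max_\theta h^*(\theta)$, so $\max_t(h+k)(t) = \max_\theta[h^*(\theta) + \mathrm{Re}(Ce^{i\theta})]$. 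Part~B then follows: $h^*$ is the supporting function of $I_\Delta^*$, so adding $\mathrm{Re}(Ce^{i\theta})$ produces the supporting function of a translate of $I_\Delta^*$ by a vector determined by $C$. The maximum of this supporting function over $\theta$ equals the radius of the smallest disk about the corresponding center point containing $I_\Delta^*$, and minimizing over $C \in \mathbb{C}$ gives, by definition, the circumradius $R_\Delta^*$.

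For Part~A I treat the two implications separately. For (i)$\Rightarrow$(ii), given $k(t) = \mathrm{Re}(Ce^{i\rho t})$ and a representation $\sum \lambda_j e^{i\rho t_j} = 0$ with $t_j \in M_h$, $\lambda_j \geq 0$, $\sum \lambda_j = 1$, multiplying by $C$ and taking real parts yields $\sum \lambda_j k(t_j) = 0$; hence $k(t_{j_0}) \geq 0$ for some $j_0$, so $\max(h+k) \geq h(t_{j_0}) + k(t_{j_0}) \geq \max h$. The reverse inequality is trivial (take $k \equiv 0$). For the contrapositive of (ii)$\Rightarrow$(i), if $0 \notin \mathrm{conv}\{e^{i\rho t}: t \in M_h\}$, the separation theorem produces $\alpha \in \mathbb{C}\setminus\{0\}$ and $\delta > 0$ with $\mathrm{Re}(\bar\alpha e^{i\rho t}) \geq \delta$ on $M_h$. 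Setting $k_\epsilon(t) = -\epsilon\,\mathrm{Re}(\bar\alpha e^{i\rho t}) \in T_\rho$, continuity of $h$ propagates the strict inequality $h + k_\epsilon < \max h$ from $M_h$ to a neighborhood $U$; outside $U$, the bound $h \leq \max h - \eta$ for some $\eta > 0$ absorbs the perturbation once $\epsilon|\alpha| < \eta$, producing $\max(h+k_\epsilon) < \max h$ in contradiction to~(ii).

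The main subtlety I anticipate is the identity $(h+k)^*(\theta) = h^*(\theta) + \mathrm{Re}(Ce^{i\theta})$, where the $T_\rho$-indeterminacy of $h_\Delta$ collapses into a simple trigonometric perturbation of the $1$-trigonometrically convex companion $h^*$. Once this is in hand, Part~B reduces to the familiar circumradius interpretation of supporting functions of planar convex bodies, and Part~A becomes a textbook separation argument in the plane.
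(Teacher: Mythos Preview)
Your argument is correct, and for Part~B it matches the paper's approach: both reduce the $\rho$-minimax to the $\rho=1$ problem via $h^*$ and then invoke the circumradius interpretation of supporting functions. For Part~A, however, your route is genuinely different and somewhat cleaner. The paper's (i)$\Rightarrow$(ii) works by reinterpreting $\rho$-balancedness as ``for every $\alpha$, each of the half-period sets $A_1(\alpha),A_2(\alpha)$ meets $M_h$,'' then, given $k(t)=C\sin\rho(t-t_0)$, picks points of $M_h$ in $A_1(t_0)$ and $A_2(t_0)$ where $k$ has opposite signs; you instead take a convex representation $\sum\lambda_j e^{i\rho t_j}=0$ and read off $\sum\lambda_j k(t_j)=0$ directly. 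For (ii)$\Rightarrow$(i), the paper explicitly constructs the lowering perturbation $\tilde h(t)=h(t)-\frac{a-b}{2}\sin\rho t$ after rotating so that $M_h\subset A_1^\eps(0)$, while you appeal to the separation theorem to produce $\alpha$ with $\mathrm{Re}(\bar\alpha e^{i\rho t})\ge\delta$ on $M_h$ and then perturb by $-\eps\,\mathrm{Re}(\bar\alpha e^{i\rho t})$. Your approach is more conceptual and generalizes readily; the paper's is more explicit and gives a quantitative handle on how much the maximum drops. One small wording point: your phrase ``continuity of $h$ propagates the strict inequality'' should really be ``continuity of $\mathrm{Re}(\bar\alpha e^{i\rho t})$ gives a neighborhood $U$ of $M_h$ on which $k_\eps<0$,'' after which compactness of $[0,2\pi]\setminus U$ provides the gap $\eta$.
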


We call a function $h\in TC_\rho$ ($\rho$ is an integer) {\bf $\rho$-balanced}, if it satisfies condition {\it(i)}. Note that any function $h\in TC_\rho$ can be made $\rho$-balanced by adding a $\rho$-trigonometric function.

\begin{corollary}\label{Cor1}
    Suppose $\rho\in\mathbb N$, and $\Lambda$ is a $\rho$-regular set with angular density~$\Delta$.
   Then $$\sigma_Z(\Lambda)=R^*_\Delta=\max_{t\in[0,2\pi]}\bigl(\widehat h(t)\bigr),$$
   where $ \widehat h$ is  the $\rho$-balanced function such that $ \widehat h-h\in T_\rho$.

   Moreover, 
   $$\sigma_Z(\Lambda)=\min\left\{\sigma: \exists f\in \mathcal E_{\rho,\sigma}:\;f^{-1}\{0\}=\Lambda\right\}.$$
\end{corollary}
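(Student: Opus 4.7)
The first chain of equalities is essentially a direct combination of the preceding theorems. By Theorem \ref{T1}(ii), $\sigma_Z(\Lambda)=\min_{k\in T_\rho}\max_{t\in[0,2\pi]}[h(t)+k(t)]$, where $h=h_\Delta$. By Theorem \ref{T2}B, this minimum equals $R^*_\Delta$. To identify both quantities with $\max_t \widehat h(t)$, I would argue as follows: since $\widehat h$ is $\rho$-balanced, the equivalence in Theorem \ref{T2}A yields $\min_{k\in T_\rho}\max_t[\widehat h(t)+k(t)]=\max_t\widehat h(t)$. On the other hand, because $\widehat h-h\in T_\rho$ and $T_\rho$ is a vector space, the substitution $k\mapsto k+(\widehat h-h)$ gives $\min_{k\in T_\rho}\max_t[\widehat h(t)+k(t)]=\min_{k\in T_\rho}\max_t[h(t)+k(t)]$. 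Combining these three equalities gives $\sigma_Z(\Lambda)=R^*_\Delta=\max_t\widehat h(t)$. Existence of at least one $\rho$-balanced representative $\widehat h$ is already asserted in the paragraph immediately preceding the corollary; the value $\max_t\widehat h(t)$ is independent of the particular choice of $\widehat h$, since it equals the invariant quantity $R^*_\Delta$.

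For the \emph{Moreover} part, I need to upgrade the infimum in the definition of $\sigma_Z(\Lambda)$ to a minimum, i.e.\ to exhibit a function $f\in\mathcal E_{\rho,R^*_\Delta}$ with $f^{-1}\{0\}=\Lambda$. The plan is to invoke the Levin-Pfluger construction \cite{Levin}: because $\Lambda$ is $\rho$-regular (in particular, \eqref{Lind} holds for integer $\rho$), the canonical product over $\Lambda$ of genus $\rho$ is an entire function $F$ of completely regular growth with $F^{-1}\{0\}=\Lambda$ and indicator function belonging to the equivalence class $h_\Delta+T_\rho$. Multiplying $F$ by $\exp(cz^\rho)$ for an appropriate $c=A+iB\in\mathbb C$ shifts the indicator by the $\rho$-trigonometric term $A\cos\rho\theta+B\sin\rho\theta$. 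Choosing $c$ so that the resulting indicator is precisely the $\rho$-balanced $\widehat h$ (possible since $\widehat h$ differs from the indicator of $F$ by an element of $T_\rho$), one obtains an entire function $f:=F(z)e^{cz^\rho}$ with $f^{-1}\{0\}=\Lambda$ and type equal to $\max_t \widehat h(t)=R^*_\Delta=\sigma_Z(\Lambda)$.

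The first chain of equalities is essentially formal given the previous results, so I expect no difficulty there. The main content — and therefore the main potential obstacle — is the \emph{Moreover} assertion, which requires producing an actual function achieving the critical type, not merely approaching it. This rests on the Levin-Pfluger theorem stating that the indicator of the canonical product over a $\rho$-regular set is indeed $h_\Delta$ up to a $\rho$-trigonometric term; however, this is a classical and explicit theorem in \cite{Levin}, so the argument reduces to a calibration of the $\rho$-trigonometric exponential factor $\exp(cz^\rho)$.
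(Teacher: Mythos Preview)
Your proposal is correct and follows essentially the same route as the paper: the first chain of equalities is obtained by combining Theorem~\ref{T1}(ii) with both parts of Theorem~\ref{T2} (the paper merely says ``immediate consequence'' without spelling out the translation-invariance argument you give), and the \emph{Moreover} part is proved exactly as you describe, by multiplying the canonical product $W_\Lambda$ by a factor $e^{Cz^\rho}$ so that the resulting indicator is the $\rho$-balanced $\widehat h$. One cosmetic remark: with $c=A+iB$ the added indicator term is $A\cos\rho\theta-B\sin\rho\theta$ rather than $A\cos\rho\theta+B\sin\rho\theta$, but since you only claim that \emph{some} $c$ works, this does not affect the argument.
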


Now, let us turn to the uniqueness sets. The next theorem is also an immediate consequence of the Levin-Pfluger theory:

\begin{theorem}\label{T3}
    Suppose $\Lambda$ is a $\rho$-regular set. Then 
    $$\sigma_U(\Lambda)=\inf_{k\in TC_\rho}\max_{t\in[0,2\pi]}\bigl(h_\Lambda(t)+k(t)\bigr).$$
    Furthermore for $\rho\le 1/2$ and for $\rho=1$,
    $$\sigma_U(\Lambda)=\sigma_Z(\Lambda).$$
\end{theorem}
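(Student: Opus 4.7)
The plan is to reduce the uniqueness problem to a geometric problem about $\rho$-trigonometrically convex functions by factoring through the canonical product of $\Lambda$. Let $P_\Lambda$ be the canonical Weierstrass product of genus $\lfloor\rho\rfloor$ built on $\Lambda$; since $\Lambda$ is $\rho$-regular, the Levin--Pfluger theory gives that $P_\Lambda$ is of completely regular growth of order $\rho$ with indicator exactly $h_\Lambda$, so in particular $P_\Lambda\in\mathcal E_{\rho,\max h_\Lambda}$. Any nonzero $f\in\mathcal E_{\rho,\sigma}$ vanishing on $\Lambda$ factors as $f=P_\Lambda\,g$ with $g$ entire of order at most $\rho$ and finite type, and conversely any such $g$ yields such an $f$. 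Because $P_\Lambda$ has completely regular growth, the indicators add on the product: $h_f=h_\Lambda+h_g$, whence $\mathrm{type}(f)=\max_\theta(h_\Lambda+h_g)$. Since $h_g\in TC_\rho$, this gives $\sigma_U(\Lambda)\ge\inf_{k\in TC_\rho}\max_\theta(h_\Lambda+k)$. For the reverse inequality the Levin--Pfluger realization theorem supplies, for any prescribed $k\in TC_\rho$, a completely-regular-growth entire function $g_k$ of order $\rho$ with $h_{g_k}=k$: one sets $\Delta_k=(k''+\rho^2 k)/(2\pi\rho)\ge 0$, notes that the moment condition~\eqref{moment} holds automatically by integration by parts from $2\pi$-periodicity when $\rho$ is integer, builds a $\rho$-regular set with this angular density as in the excerpt, and takes its canonical product (corrected by a factor $e^{p(z)}$ with a polynomial $p$ of degree $\rho$ in the integer case). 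Then $P_\Lambda\,g_k$ lies in $\mathcal E_{\rho,\max(h_\Lambda+k)}$ and vanishes on $\Lambda$, giving the matching upper bound.

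For the case $\rho\le 1/2$ the exponent is non-integer, so Theorem~\ref{T1}(i) identifies $\sigma_Z(\Lambda)=\max h_\Lambda$. In view of~\eqref{sigma} it suffices to show $\max(h_\Lambda+k)\ge\max h_\Lambda$ for every $k\in TC_\rho$. For non-integer $\rho$ the homogeneous equation $k''+\rho^2 k=0$ has no nonzero $2\pi$-periodic solution, so every $2\pi$-periodic $k\in TC_\rho$ equals $h_{\Delta_k}$ as given by the first branch of the integral formula in the excerpt. For $\varphi\in[\theta-2\pi,\theta]$ we have $\theta-\varphi-\pi\in[-\pi,\pi]$, and the assumption $\rho\le 1/2$ gives $|\rho(\theta-\varphi-\pi)|\le\pi/2$; hence the kernel $\cos\rho(\theta-\varphi-\pi)$ is non-negative, and since $\Delta_k\ge 0$ and $\sin\pi\rho>0$ one concludes $k\ge 0$ pointwise on $[0,2\pi]$. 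Evaluating at a point $\theta^*$ where $h_\Lambda$ attains its maximum then yields $\max(h_\Lambda+k)\ge h_\Lambda(\theta^*)+k(\theta^*)\ge\max h_\Lambda$, which proves $\sigma_U\ge\sigma_Z$.

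For the case $\rho=1$ a function $k\in TC_1$ is exactly the supporting function of some convex compact planar set $K_k$, and $h_\Lambda$ is the supporting function of $I_{\Delta_\Lambda}$, so $h_\Lambda+k$ is the supporting function of the Minkowski sum $I_{\Delta_\Lambda}+K_k$. Using the identity $\max_\theta h_C(\theta)=\max_{p\in C}|p|$, one gets $\max_\theta(h_\Lambda+k)=\max_{p\in I_{\Delta_\Lambda}+K_k}|p|$. For any $r_0\in K_k$ one has $I_{\Delta_\Lambda}+K_k\supseteq I_{\Delta_\Lambda}+\{r_0\}$, so this maximum is at least $\max_{q\in I_{\Delta_\Lambda}}|q+r_0|$; minimising over $r_0\in\mathbb R^2$ produces the circumradius $R_{\Delta_\Lambda}$. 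Since singletons have supporting functions lying in $T_1\subset TC_1$, the infimum over $TC_1$ is already attained within $T_1$ and coincides with the circumradius characterisation of $\sigma_Z$ discussed after Theorem~\ref{T1}. Together with~\eqref{sigma} this gives $\sigma_U=R_{\Delta_\Lambda}=\sigma_Z$.

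The main obstacle is the careful invocation of the two Levin--Pfluger inputs on which the whole argument rests: first, the additivity $h_{P_\Lambda\,g}=h_\Lambda+h_g$, which genuinely exploits completely regular growth of $P_\Lambda$ and not merely the value of its indicator; and second, the converse realization of every $k\in TC_\rho$ as the indicator of a completely-regular-growth entire function of order $\rho$, including the correction by $e^{p(z)}$ needed when $\rho$ is integer. Both ingredients are classical in~\cite{Levin}; once they are in place, the specialisation to $\rho\le 1/2$ reduces to pointwise positivity of an elementary cosine kernel, while the specialisation to $\rho=1$ reduces to elementary Minkowski geometry of supporting functions.
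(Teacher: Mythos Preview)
Your argument for the main formula $\sigma_U(\Lambda)=\inf_{k\in TC_\rho}\max(h_\Lambda+k)$ is essentially the paper's: factor through the canonical product, invoke indicator additivity for functions of completely regular growth, and realise an arbitrary $k\in TC_\rho$ as an indicator. Your treatment of $\rho\le 1/2$ also agrees with the paper, which simply asserts that all $\rho$-trigonometrically convex functions are non-negative in this range; you supply a direct verification via the explicit kernel, which is fine.

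Where you genuinely diverge is the case $\rho=1$. The paper argues by contradiction using the $1$-balanced modification $h^*_\Lambda$: if some $k^*\in TC_1$ lowered the maximum, then $k^*$ would be non-negative on a closed interval $J$ of length $\pi$ (a basic property of $TC_1$), while every such $J$ meets $M_{h^*_\Lambda}$ by balancedness, forcing a contradiction. Your route is instead pure Minkowski geometry: interpret $h_\Lambda+k$ as the support function of $I_{\Delta_\Lambda}+K_k$, observe that $\max_\theta(h_\Lambda+k)=\max_{p\in I_{\Delta_\Lambda}+K_k}|p|\ge\max_{q\in I_{\Delta_\Lambda}}|q+r_0|$ for any $r_0\in K_k$, and use that the latter is always at least the circumradius $R_{\Delta_\Lambda}$; equality with $\sigma_Z$ then follows from $T_1\subset TC_1$. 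Both arguments are correct. Yours is more self-contained and avoids introducing the balancedness machinery at this stage; the paper's approach, on the other hand, foreshadows the locally $\rho$-balanced condition that drives Theorems~\ref{T5} and~\ref{T6}.
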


It is not difficult to show that for other values of $\rho$ the quantities $\sigma_U(\Lambda)$ and $\sigma_Z(\Lambda)$ can be different:
\begin{theorem}\label{T4}
    Given $\rho\in (1/2, \infty)\setminus \{1\},$ there exists a $\rho$-regular set $\Lambda$ with $\sigma_U(\Lambda)<\sigma_Z(\Lambda)$.
\end{theorem}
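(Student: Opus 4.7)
The plan is to construct, for each $\rho\in(1/2,\infty)\setminus\{1\}$, two $\rho$-regular sets $\Lambda\subset\Lambda'$ with $\sigma_Z(\Lambda')<\sigma_Z(\Lambda)$. Since $\Lambda\subset\Lambda'$, any entire function vanishing on $\Lambda'$ also vanishes on $\Lambda$, so any uniqueness-set property of $\Lambda$ for $\mathcal{E}_{\rho,\sigma}$ is inherited by $\Lambda'$, yielding $\sigma_U(\Lambda)\le\sigma_U(\Lambda')$. Combined with \eqref{sigma} this gives
$$\sigma_U(\Lambda)\le\sigma_U(\Lambda')\le\sigma_Z(\Lambda')<\sigma_Z(\Lambda),$$
as desired.

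I would carry out the construction in detail for the representative regime $\rho\in(1/2,1)$. Take $\Lambda=\{n^{1/\rho}:n\ge 1\}$, which is $\rho$-regular with $\Delta_\Lambda=\delta_0$; the explicit formula gives
$$h_\Lambda(\theta)=\frac{\pi}{\sin\rho\pi}\cos\rho(\theta-\pi),\qquad\theta\in[0,2\pi],$$
whose maximum is $\pi/\sin\rho\pi$, attained at $\theta=\pi$, so by Theorem~\ref{T1}(i) we have $\sigma_Z(\Lambda)=\pi/\sin\rho\pi$. Setting $c=-\cos\rho\pi>0$ (positive since $\rho\pi\in(\pi/2,\pi)$), take $\Lambda'$ to be the union of $\Lambda$ with a $\rho$-regular set of density $c\delta_\pi$ (say, suitably scaled points on the negative real axis), so $\Delta_{\Lambda'}=\delta_0+c\delta_\pi$. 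On $[0,\pi]$ one has
$$h_{\Lambda'}(\theta)=\frac{\pi}{\sin\rho\pi}\bigl[(c+\cos\rho\pi)\cos\rho\theta+\sin\rho\pi\sin\rho\theta\bigr],$$
whose critical points satisfy $\tan\rho\theta=\sin\rho\pi/(c+\cos\rho\pi)$, with corresponding critical value $\frac{\pi}{\sin\rho\pi}\sqrt{1+2c\cos\rho\pi+c^{2}}$. At $c=-\cos\rho\pi$ this reduces to $\pi$. The boundary values at $\theta=0$ and $\theta=\pi$ are $0$ and $\pi\sin\rho\pi<\pi$ respectively, and $\Delta_{\Lambda'}$ is invariant under $\theta\mapsto 2\pi-\theta$, so the global maximum is indeed $\pi$. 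Hence $\sigma_Z(\Lambda')=\pi<\pi/\sin\rho\pi=\sigma_Z(\Lambda)$.

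For the other regimes I would run parallel constructions. For $\rho>1$ non-integer the same $\Delta=\delta_0$ can be used, but the extra mass should be placed according to where the maxima of $h_\Lambda$ occur (now at $\pi\pm\pi/\rho$) and according to the signs of $\sin\rho\pi$ and $\cos\rho\pi$ (which depend on the interval $(\lfloor\rho\rfloor,\lfloor\rho\rfloor+1)$ containing $\rho$). For integer $\rho\ge 2$ one starts instead from a density satisfying \eqref{moment}, e.g.\ $\Delta=\delta_{0}+\delta_{\pi/\rho}$, and adjoins a pair of point masses at positions chosen so that both \eqref{moment} is preserved and the kernel contribution is negative at the maximum points of the $\rho$-balanced $\widehat{h}$ from Corollary~\ref{Cor1}.

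The main obstacle is the global-versus-local maximum verification: our perturbation is designed to lower the value at the original maximum points, but one must ensure no new, higher maximum appears elsewhere, which requires boundary and symmetry checks in each regime. In the integer case an additional technical difficulty is that the perturbation $\mu$ must simultaneously preserve the $\rho$-th moment condition \eqref{moment} and strictly decrease the circumradius $R_{\Delta}^{*}$ of Corollary~\ref{Cor1}; this forces $\mu$ to have at least two atoms, making the sign analysis of $h_{\Delta+\mu}$ more delicate.
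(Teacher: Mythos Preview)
Your containment strategy is really a repackaging of the paper's method. By Theorem~\ref{T3}, $\sigma_U(\Lambda)=\inf_{k\in TC_\rho}\max_t(h_\Lambda+k)$, and every $2\pi$-periodic $k\in TC_\rho$ is $h_\mu$ for some nonnegative measure $\mu$ via \eqref{h''}; choosing a $\rho$-regular superset $\Lambda'\supset\Lambda$ with angular density $\Delta_\Lambda+\mu$ is therefore exactly the same as exhibiting a $k=h_\mu$ with $\max(h_\Lambda+k)<\max h_\Lambda$. The paper works directly in the latter language, producing explicit pairs $(\Delta,k)$ in four cases (Examples~2--5) covering $\rho\ge3$ integer, $\rho=2$, $\rho\in(1,\infty)\setminus\mathbb N$, and $\rho\in(1/2,1)$.

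Your fully worked case $\rho\in(1/2,1)$ is correct and coincides, up to a rotation by $\pi$, with the paper's Example~5: there $\Delta=\delta_\pi$, $h_\Delta(t)=\frac{\pi}{\sin\pi\rho}\cos\rho t$, and the paper's multiplier $k(t)=-\pi\frac{\cos\pi\rho}{\sin\pi\rho}\cos\rho(\pi-|t|)$ corresponds precisely to your added mass $(-\cos\rho\pi)\,\delta_0$, giving $h_{\Lambda'}(t)=\pi\sin\rho|t|$ and $\sigma_Z(\Lambda')=\pi$.

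The remaining regimes are where your proposal has a genuine gap. For $\rho>1$ non-integer you keep $\Delta=\delta_0$, but the shape of $h_{\delta_0}$ depends on $\lfloor\rho\rfloor$ in a way you do not track: your parenthetical ``now at $\pi\pm\pi/\rho$'' is correct only for $\rho\in(1,2)$, while for $\rho\in(2,3)$ the maxima sit at $\pi$ and $\pi\pm 2\pi/\rho$, and so on; you neither locate the mass nor verify that no new maximum appears. The paper sidesteps this entirely by taking $\Delta_4=\delta_{-\pi/(2\rho)}+\delta_{\pi/(2\rho)}$, for which $h_{\Delta_4}(t)=2\pi\cos(\rho t)\,\mathbf 1_{[-\pi/(2\rho),\pi/(2\rho)]}$ has a single bump independent of $\lfloor\rho\rfloor$, and a single $k(t)=-\pi\cos\rho t\cdot\mathbf 1_{[-\tau,\tau]}$ works for all such $\rho$ at once. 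For integer $\rho\ge2$ you only outline that two extra atoms should be added to $\delta_0+\delta_{\pi/\rho}$ so as to preserve \eqref{moment} and shrink $R_\Delta^*$, without specifying them or carrying out the circumradius computation; the paper instead gives concrete measures ($\Delta_2$ with four atoms for $\rho\ge3$, $\Delta_3$ with three atoms for $\rho=2$) together with explicit $k$'s and checks the maxima directly. As written, your argument proves the theorem only for $\rho\in(1/2,1)$; the other ranges still require the constructions to be specified and verified.
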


Our next result characterizes the equality $\sigma_U(\Lambda)=\sigma_Z(\Lambda)$ in the case $\rho>1/2$.
Given a function $h\in TC_\rho$, we set
\begin{equation}
    \label{lb-m}
\widehat h=\begin{cases}
    h,&\rho \notin\mathbb N,\\
    \rho{\rm-balanced\;\; modification\;\; of \;}h,&\rho \in\mathbb N.
\end{cases}
\end{equation}

We call a set $M\subset [0,2\pi]$ {\bf locally $\rho$-balanced}, if there exists $\theta^*\in [0,2\pi]$ such that
\begin{equation}\label{loc-bal-geom}
0\in { \rm conv}\{e^{it\rho}: t\in M\cap [\theta^*,\theta^*+{2\pi}/{\rho}]\}.
\end{equation}

We call the function $h\in TC_\rho, \;\;\rho>1/2$, locally $\rho$-balanced according to the corresponding property of the set $M_{\widehat h}=\{\theta:\;\widehat h(\theta)=\displaystyle\max_{[0,2\pi]}\widehat h(t)\}$.

\begin{theorem}\label{T5}
    Let $\rho >1/2$, and let $\Lambda$ be a $\rho$-regular set. Then TFAE:
\begin{enumerate}
    \item [(i)] $\sigma_U(\Lambda)=\sigma_Z(\Lambda)$.
    \item [(ii)] the function $\widehat h_\Lambda$ is locally $\rho$-balanced;

\end{enumerate}
\end{theorem}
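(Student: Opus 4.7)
The plan is to combine Theorem~\ref{T3} with Theorem~\ref{T1}(i) and Corollary~\ref{Cor1} to reduce the identity $\sigma_U(\Lambda)=\sigma_Z(\Lambda)$ to a positivity statement for $\rho$-trigonometrically convex functions, and then to match that positivity statement with the local geometric condition on $M_{\widehat h_\Lambda}$. By Theorem~\ref{T1}(i) or Corollary~\ref{Cor1}, $\sigma_Z(\Lambda)=\max_t \widehat h_\Lambda(t)$ in all cases. By Theorem~\ref{T3}, $\sigma_U(\Lambda)=\inf_{k\in TC_\rho}\max_t(h_\Lambda+k)$; for integer $\rho$ one has $\widehat h_\Lambda-h_\Lambda\in T_\rho$, and since the defining inequality of $TC_\rho$ is preserved under addition by elements of $T_\rho$, this infimum equals $\inf_{k\in TC_\rho}\max_t(\widehat h_\Lambda+k)$; for non-integer $\rho$ the same identity is trivial. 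A standard scaling $k\mapsto\lambda k$, $\lambda\downarrow 0$, then shows that this infimum equals $\max_t \widehat h_\Lambda(t)$ if and only if no $k\in TC_\rho$ is strictly negative on the whole of $M_{\widehat h_\Lambda}$. This is the form in which I will verify the equivalence.

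For (ii)~$\Rightarrow$~(i) I would prove the following positivity lemma: if $\rho>1/2$, $k\in TC_\rho$ is $2\pi$-periodic (satisfying the Lindel\"of moment condition when $\rho\in\mathbb N$), $\theta^*\in\mathbb R$, and $\alpha_j\ge 0$, $t_j\in[\theta^*,\theta^*+2\pi/\rho]$ with $\sum_j\alpha_j e^{i\rho t_j}=0$, then $\sum_j\alpha_j k(t_j)\ge 0$. To prove it, rescale $s=\rho(t-\theta^*)\in[0,2\pi]$ and set $K(s)=k(\theta^*+s/\rho)$; the identity $K''+K=\rho^{-2}(k''+\rho^2k)\ge 0$ places $K$ in $TC_1([0,2\pi])$, so one has the representation $K(s)=A\cos s+B\sin s+\int_0^s\sin(s-\varphi)\,d\mu(\varphi)$ with $\mu\ge 0$. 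The moment identity $\sum_j\alpha_j e^{is_j}=0$ kills the $A\cos s_j+B\sin s_j$ contributions, leaving $\sum_j\alpha_j K(s_j)=\int_0^{2\pi}F(\varphi)\,d\mu(\varphi)$, where $F(\varphi)=\sum_{s_j\ge\varphi}\alpha_j\sin(s_j-\varphi)$. Positivity of $F$ is a direct geometric argument: since $0\in\mathrm{conv}\{e^{is_j}\}$, all circular gaps between the sorted points $s_{(j)}$ are at most $\pi$, so for each $k$ either $s_{(k+1)}-s_{(1)}\le\pi$ (in which case $F(s_{(k+1)}^-)=\sum_{j\le k}\alpha_{(j)}\sin(s_{(k+1)}-s_{(j)})\ge 0$ termwise) or, using the moment identity $\sum_j\alpha_j\sin(s_{(k+1)}-s_j)=0$, one has $F(s_{(k+1)}^-)=\sum_{j>k}\alpha_{(j)}\sin(s_{(j)}-s_{(k+1)})\ge 0$ termwise (since $s_{(n)}-s_{(k+1)}\le\pi$ in that case). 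Combined with the elementary fact that on each subinterval $(s_{(k)},s_{(k+1)})$ the value of $F$ is obtained from its endpoint values by rotation on a circle of radius at most $|C_k|$, one concludes $F\ge 0$ throughout $[0,2\pi]$. The lemma immediately gives (ii)~$\Rightarrow$~(i): any convex combination witnessing the local $\rho$-balancedness at $\theta^*$ forces $\max_{t\in M_{\widehat h_\Lambda}}k(t)\ge 0$ for every admissible $k$.

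For (i)~$\Rightarrow$~(ii) I would argue by contrapositive and construct $k\in TC_\rho$ strictly negative on $M_{\widehat h_\Lambda}$ from the assumption that $M_{\widehat h_\Lambda}$ is not locally $\rho$-balanced. Then for every $\theta^*\in[0,2\pi]$ the origin is strictly separated from the compact convex set $\mathrm{conv}\{e^{i\rho t}:t\in M_{\widehat h_\Lambda}\cap[\theta^*,\theta^*+2\pi/\rho]\}$ by some unit direction $e^{i\alpha(\theta^*)}$, giving $\cos(\rho t-\alpha(\theta^*))>\delta(\theta^*)>0$ on the corresponding portion of $M_{\widehat h_\Lambda}$. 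When $\rho\in\mathbb N$ the function $-\cos(\rho t-\alpha(\theta^*))$ lies in $T_\rho\subset TC_\rho$, is $2\pi$-periodic, and satisfies the moment condition; compactness of $[0,2\pi]$ lets us extract finitely many $\theta^*_1,\dots,\theta^*_N$ whose corresponding arcs cover $[0,2\pi]$, and a suitable positive linear combination produces a single $k\in TC_\rho$ strictly negative on all of $M_{\widehat h_\Lambda}$. For non-integer $\rho$ these cosines fail to be $2\pi$-periodic, so I would instead use the explicit representation $k=h_\mu$ stated before the theorem, choosing $\mu$ as a combination of point masses at locations $\varphi_j$ for which the Green-kernel slices $G_\rho(\,\cdot\,-\varphi_j)$ are negative on the corresponding portions of $M_{\widehat h_\Lambda}$, with weights chosen so that the pointwise sum is globally negative on $M_{\widehat h_\Lambda}$.

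The main obstacle is this last construction in the non-integer case: the kernel $G_\rho$ changes sign on $[0,2\pi]$, so translating the local separating directions $e^{i\alpha(\theta^*)}$ into a globally negative admissible $k$ is delicate and requires care in choosing the atoms of $\mu$. A cleaner but less explicit route is Hahn--Banach duality: the non-existence of an admissible $k$ strictly negative on $M_{\widehat h_\Lambda}$ would produce a positive measure $\nu$ on $M_{\widehat h_\Lambda}$ with $\int k\,d\nu\ge 0$ for every admissible $k$; testing this against the family $\{h_{\delta_\varphi}\}_{\varphi\in[0,2\pi]}$ gives $G_\rho\ast\nu\ge 0$ on the torus, and an analysis of the support and moments of $\nu$ then extracts the locally $\rho$-balanced configuration required by (ii). Either route shifts the technical weight onto the same positivity phenomenon that drives the lemma in the sufficiency direction.
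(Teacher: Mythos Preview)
Your reduction of $\sigma_U(\Lambda)=\sigma_Z(\Lambda)$ to the statement ``no $k\in TC_\rho$ is strictly negative on $M_{\widehat h_\Lambda}$'' is correct, and your positivity lemma for the direction (ii)$\Rightarrow$(i) is a valid (and more explicit) proof of the one-line fact the paper uses, namely that for $\rho>1/2$ the negativity set of any $k\in TC_\rho$ is never locally $\rho$-balanced.

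The direction (i)$\Rightarrow$(ii), however, has a genuine gap. For integer $\rho$ your construction cannot work: the functions $-\cos(\rho t-\alpha(\theta^*))$ lie in $T_\rho$, and any positive linear combination of them is again an element of $T_\rho$. But $\widehat h_\Lambda$ is, by definition, the $\rho$-balanced modification of $h_\Lambda$, so $0\in\mathrm{conv}\{e^{i\rho t}:t\in M_{\widehat h_\Lambda}\}$; writing $k(t)=\mathrm{Re}(Ce^{i\rho t})$ and taking a convex combination $\sum\alpha_j e^{i\rho t_j}=0$ with $t_j\in M_{\widehat h_\Lambda}$ gives $\sum\alpha_j k(t_j)=0$, hence $\max_j k(t_j)\ge 0$. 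Thus \emph{no} element of $T_\rho$ is strictly negative on $M_{\widehat h_\Lambda}$, and your ``suitable positive linear combination'' is forced to fail. (Concretely, for $\rho=2$ and $M=\{0,2\pi/3,4\pi/3\}$ your three separating cosines sum to zero identically.) Your alternative Hahn--Banach route is only sketched and faces the same obstruction when you try to test against $h_{\delta_\varphi}$, since for integer $\rho$ single atoms violate the moment condition.

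The paper bypasses this by an explicit piecewise-$\rho$-sinusoidal construction that works uniformly for all $\rho>1/2$. The key structural observation is that ``$M$ is not locally $\rho$-balanced'' is equivalent to: $M$ can be covered by finitely many open arcs $I_j$ of length $<\pi/\rho$ whose complementary gaps $J_j$ all have length $>\pi/\rho$. On each $I_j=(\alpha_j,\beta_j)$ one sets $k(t)=\frac{a-b}{2}\max(\sin\rho(t-\beta_j),\sin\rho(\alpha_j-t))$, which is strictly negative there, and on each $J_j$ one patches in nonnegative half-sine bumps $s_\rho(t)=\sin\rho t\cdot\mathbf 1_{[0,\pi/\rho]}$; the resulting $k$ is continuous, $2\pi$-periodic, and one checks $k'_+\ge k'_-$ at the finitely many corner points to conclude $k\in TC_\rho$. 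This is the missing ingredient in your argument.
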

Our next result gives sharp lower bounds for $\sigma_U(\Lambda)$, which might be useful in the case when $\sigma_U(\Lambda)<\sigma_Z(\Lambda)$.

\begin{theorem}\label{T6}
    Let $\rho>1/2$, and let $\Lambda$ be a $\rho$-regular set.

    Put $$A_\Lambda:=\displaystyle \frac12\max_{\theta\in[0,2\pi]}\left(h_\Lambda(\theta)+h_\Lambda(\theta+\frac\pi\rho)\right).$$
Then  \begin{equation}  \label{6+}\sigma_U(\Lambda)\ge A_\Lambda.
    \end{equation}

    In addition,
\begin{equation}\label{6ge}
    \sigma_U(\Lambda)\ge C
\end{equation}
provided that the set $
\{t:\widehat h_\Lambda(t)\ge C\}$ is locally $\rho$-balanced.
\end{theorem}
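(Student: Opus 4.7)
The plan is to invoke Theorem~\ref{T3}, which expresses $\sigma_U(\Lambda)$ as $\inf_{k \in TC_\rho} \max_{t \in [0,2\pi]}(h_\Lambda(t) + k(t))$; both bounds thus reduce to bounding $\max_t(h_\Lambda + k)$ from below uniformly in $k \in TC_\rho$. A recurring ingredient will be the elementary inequality $k(\theta) + k(\theta + \pi/\rho) \ge 0$, valid for every $k \in TC_\rho$ and every $\theta$. It is obtained by taking $\theta_2 - \theta_1 = \pi/\rho$ in the $\rho$-trigonometric convexity inequality: $\sin\rho(\theta_2-\theta_1) = 0$ so the left-hand side vanishes, while the identity $\sin\rho(\theta_2-\theta) = \sin\rho(\theta-\theta_1)$ collapses the right-hand side to $(k(\theta_1)+k(\theta_2))\sin\rho(\theta-\theta_1)$, which is then forced to be nonnegative. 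Adding $h_\Lambda(\theta) + k(\theta) \le \max_t(h_\Lambda + k)$ to its counterpart at $\theta+\pi/\rho$ then yields
\[
2\max_t(h_\Lambda + k) \;\ge\; h_\Lambda(\theta) + h_\Lambda(\theta + \pi/\rho),
\]
and taking the supremum over $\theta$ and the infimum over $k$ gives \eqref{6+}.

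For \eqref{6ge} I will use the local $\rho$-balance of $B := \{t : \widehat h_\Lambda(t) \ge C\}$ to produce $\theta^*$ and, via Carath\'eodory, finitely many $t_j \in B \cap [\theta^*, \theta^*+2\pi/\rho]$ together with positive weights $\alpha_j$ summing to $1$ and satisfying $\sum_j \alpha_j e^{i\rho t_j} = 0$. For every $k \in TC_\rho$,
\[
\max_t(h_\Lambda + k) \;\ge\; \sum_j \alpha_j h_\Lambda(t_j) + \sum_j \alpha_j k(t_j),
\]
and the two sums are handled separately. The $h_\Lambda$-sum is at least $C$: when $\rho \notin \mathbb{N}$ this is immediate from $\widehat h_\Lambda = h_\Lambda$ and $t_j \in B$; when $\rho \in \mathbb{N}$ the function $\widehat h_\Lambda - h_\Lambda \in T_\rho$ has the form $\mathrm{Re}(D e^{i\rho t})$, so the moment condition kills $\sum_j \alpha_j(\widehat h_\Lambda - h_\Lambda)(t_j)$ and again $\sum_j \alpha_j h_\Lambda(t_j) = \sum_j \alpha_j \widehat h_\Lambda(t_j) \ge C$.

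The main obstacle is showing $\sum_j \alpha_j k(t_j) \ge 0$. After the change of variable $\tau = \rho(t - \theta^*)$ and $K(\tau) := k(\theta^* + \tau/\rho)$, I get $K \in TC_1$ on $[0, 2\pi]$ because $K'' + K = \rho^{-2}(k'' + \rho^2 k) \ge 0$; moreover the pushforward $\tilde\mu$ is a positive probability measure on $[0,2\pi]$ with $\int e^{i\tau}\,d\tilde\mu = 0$. I will use the representation $K(\tau) = A\cos\tau + B\sin\tau + \int_0^\tau \sin(\tau-u)\, d\tilde\nu(u)$ with $\tilde\nu := K'' + K \ge 0$: the moment condition annihilates the trigonometric terms when integrated against $\tilde\mu$, leaving
\[
\int K\,d\tilde\mu \;=\; \int_0^{2\pi} J(u)\, d\tilde\nu(u), \qquad J(u) := \int_u^{2\pi}\sin(\tau - u)\,d\tilde\mu(\tau).
\]
For $u \in [\pi, 2\pi]$ one has $\tau - u \in [0, \pi]$ and $J(u) \ge 0$ at once; for $u \in [0, \pi]$ the moment condition lets me rewrite $J(u) = \int_0^u \sin(u-\tau)\,d\tilde\mu(\tau)$, and now $u - \tau \in [0, \pi]$ gives $J(u) \ge 0$ again. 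Hence $\int K\, d\tilde\mu \ge 0$, completing the proof of \eqref{6ge}. The subtlety is precisely this two-case analysis: without the moment-based rewrite of $J$ on $[0, \pi]$ the Green's-function kernel $\sin(\tau - u)$ would carry the wrong sign.
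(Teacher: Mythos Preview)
Your proof of \eqref{6+} is essentially the paper's argument (the paper phrases it by contradiction, but both rest on the same key inequality $k(\theta)+k(\theta+\pi/\rho)\ge 0$ for $k\in TC_\rho$).

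For \eqref{6ge} your approach is correct but genuinely different. The paper argues in two lines: if $\sigma_U(\Lambda)<C$ then some $k\in TC_\rho$ satisfies $\widehat h_\Lambda+k<C$ everywhere, so $k<0$ on $\{\widehat h_\Lambda\ge C\}$; but the negative set $\{t:k(t)<0\}$ of any $k\in TC_\rho$ with $\rho>1/2$ is never locally $\rho$-balanced (this structural fact was already invoked in the proof of Theorem~\ref{T5}), a contradiction. You instead extract, via Carath\'eodory, an explicit barycentric system $(t_j,\alpha_j)$ with $\sum\alpha_je^{i\rho t_j}=0$ and prove directly that $\sum_j\alpha_jk(t_j)\ge 0$ by rescaling to $TC_1$, writing the variation-of-parameters representation of $K$, and using a Fubini swap together with the moment condition to control the sign of $J(u)$. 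Your argument is more computational but fully self-contained, and it actually proves the sharper linear inequality $\sum_j\alpha_jk(t_j)\ge 0$ for every such barycentric system; the paper's argument is shorter because the statement ``$\{k<0\}$ is not locally $\rho$-balanced'' packages exactly this phenomenon once and for all. Your handling of the integer-$\rho$ case (using the moment condition to kill the $T_\rho$ discrepancy between $h_\Lambda$ and $\widehat h_\Lambda$) is also a clean touch that the paper sidesteps by working with $\widehat h_\Lambda$ from the outset.
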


{ Our last two theorems  are inspired by a result of Ascensi, Lyubarskii and Seip  \cite[Theorem 2]{ALS}. Given a $\rho$-regular set $\Lambda\subset\mathbb C$  with an angular density $\Delta_\Lambda$,  these theorems  focus on exploring possible  densities 
$$\mathcal D(\Lambda)=\Delta_\Lambda([0,2\pi))$$
in some critical cases.

\begin{theorem}\label{T7}
Let $\mathcal{A}_\rho$ denote the class of all $\rho$-regular sets $\Lambda\subset\mathbb C$ that satisfy:
\begin{enumerate}
  \item $\Lambda$ is a zero set for $\mathcal E_{\rho,1}$;
  \item $\Lambda$ is a uniqueness set for every $\mathcal E_{\rho,\sigma}$ with $\sigma<1$.
\end{enumerate}
Consider the density map
\[
\mathcal D:\ \mathcal A_\rho\to\mathbb R_{+},\qquad 
\mathcal D(\Lambda)=\Delta_\Lambda([0,2\pi)),
\]
where $\Delta_\Lambda$ is the angular density measure of $\Lambda$.
Then:
\[
\mathcal D(\mathcal A_\rho)=
\begin{cases}
\displaystyle\left[\dfrac{\sin(\pi\rho)}{\pi},\ \rho\right], & \rho\in(0,\tfrac12],\\[2.2ex]
\displaystyle\left[\dfrac{1+|\cos(\pi\rho)|}{\pi},\ \rho\right], & \rho\in(\tfrac12,\infty).
\end{cases}
\]
\end{theorem}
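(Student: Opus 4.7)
The plan is to first reduce the theorem to a statement about trigonometrically convex functions. Conditions (1) and (2) in the definition of $\mathcal A_\rho$ are equivalent to $\sigma_Z(\Lambda)\le 1$ and $\sigma_U(\Lambda)\ge 1$; combined with $\sigma_U\le\sigma_Z$ from \eqref{sigma} they force $\sigma_U(\Lambda)=\sigma_Z(\Lambda)=1$. Integrating the distributional identity \eqref{h''} over one period and using $2\pi$-periodicity to kill the $h''$ term yields
\[
\mathcal D(\Lambda)=\frac{\rho}{2\pi}\int_0^{2\pi}\widehat h_\Lambda(t)\,dt,
\]
since in the integer case adding a $\rho$-trigonometric term does not affect the integral. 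By Corollary~\ref{Cor1} and Theorem~\ref{T5}, the problem becomes: determine the range of $\tfrac{\rho}{2\pi}\int\widehat h$ over $\widehat h\in TC_\rho$ with $\max\widehat h=1$ and $\widehat h$ locally $\rho$-balanced (automatic for $\rho\le 1/2$ by Theorem~\ref{T3}).

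The upper bound $\mathcal D\le\rho$ is immediate from $\widehat h\le 1$ and is realised by $\Delta=(\rho/2\pi)\,d\theta$, for then $\widehat h\equiv 1$ and $M_{\widehat h}=[0,2\pi]$ is trivially locally $\rho$-balanced. For the lower bound when $\rho\le 1/2$, I would use the explicit kernel representation for $\widehat h$ displayed after Theorem~\ref{T1} in the case $\rho\notin\mathbb N$, combined with the trivial bound $|\cos\rho(\cdot-\pi)|\le 1$, to obtain $1=\widehat h(\theta_0)\le(\pi/\sin\pi\rho)\mathcal D(\Lambda)$ at a maximum point $\theta_0$; a single Dirac of mass $\sin(\pi\rho)/\pi$ saturates this.

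When $\rho>1/2$ the value $(1+|\cos\pi\rho|)/\pi$ is attained by the following construction. Fix $t_1\in[0,2\pi)$, set $t_2=t_1+\pi/\rho$, and define $h$ to equal $\cos\rho(\theta-t_1)$ near $t_1$ and $-\cos\rho(\theta-t_1)$ near $t_2$, joined by a slope jump at the midpoint $t_1+\pi/(2\rho)$; then extend $-\cos\rho(\theta-t_1)$ past $t_2$ (absorbing the free maxima at $t_1+(2k+1)\pi/\rho\in[0,2\pi]$) and match to $\cos\rho(\theta-t_1-2\pi)$ via a second slope jump at a point $\eta\in(t_2,t_1+2\pi)$ chosen so both expressions and their values agree there. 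Computing the Dirac masses appearing in $h''+\rho^2h$ then gives $1/\pi$ at the first jump and $|\cos\pi\rho|/\pi$ at the second, summing to the asserted value; along the way one checks that $h\le 1$ globally and that $\{t_1,t_2\}\subset M_{\widehat h}$ is a $\rho$-antipodal witnessing pair.

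The main obstacle is the matching lower bound $\mathcal D(\Lambda)\ge(1+|\cos\pi\rho|)/\pi$ for $\rho>1/2$. My approach is to combine Theorem~\ref{T6} with local balance: since $\sigma_U(\Lambda)=1$, the bound \eqref{6+} gives $\widehat h(\theta)+\widehat h(\theta+\pi/\rho)\le 2$ pointwise, while the local $\rho$-balance of $M_{\widehat h}$ should produce a pair $t_1,t_2\in M_{\widehat h}$ with $t_2-t_1=\pi/\rho$ and $\widehat h(t_1)+\widehat h(t_2)=2$. Inserting the kernel representation of $\widehat h$ one computes
\[
2=\widehat h(t_1)+\widehat h(t_2)=2\pi\int_{(t_1,t_2)}\sin\rho(\varphi-t_1)\,d\Delta(\varphi),
\]
so $\Delta((t_1,t_2))\ge 1/\pi$. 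The remaining contribution $|\cos\pi\rho|/\pi$ of mass on $(t_2,t_1+2\pi)$ should follow from a second pairing argument, this time applied to the "closing" of $\widehat h$ across $2\pi$ and invoking the second clause \eqref{6ge} of Theorem~\ref{T6} with $C=1$ in order to exploit the phase mismatch. The delicate point will be to select the correct auxiliary test evaluation so that the combined inequality produces the asserted bound rather than merely $1/\pi$; I expect this step to require either a linear-programming duality on the cone of admissible measures $\Delta$, or a rearrangement argument based on the $\rho$-trigonometric extension structure underlying the construction above (and, in the integer-$\rho$ case, a careful handling of the moment condition \eqref{moment}).
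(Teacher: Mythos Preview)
Your reduction is correct and matches the paper: conditions (1) and (2) force $\sigma_U(\Lambda)=\sigma_Z(\Lambda)=1$, the density formula follows from \eqref{h''}, the upper bound is trivial with the uniform measure as extremal, and for $\rho\le 1/2$ your kernel estimate is a valid (and in fact slightly slicker) alternative to the paper's argument, which instead uses the pointwise envelope inequality $\widehat h_\Delta(t)\ge\cos\rho(t-\theta_0)$ for $\theta_0\in M$ and integrates.

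The genuine gap is in the lower bound for $\rho>1/2$. First, local $\rho$-balance of $M_{\widehat h}$ does \emph{not} guarantee an antipodal pair $t_1,t_2=t_1+\pi/\rho$ in $M$; it guarantees either such a pair or a strict triple $\alpha<\beta<\gamma$ with $\gamma-\alpha>\pi/\rho$ and each consecutive gap below $\pi/\rho$, and the triple case has to be handled separately. Second, even granting an antipodal pair, your integration-by-parts identity correctly yields $\Delta((t_1,t_2))\ge 1/\pi$, but the proposed ``second pairing argument'' for the remaining $|\cos\pi\rho|/\pi$ on the complementary arc of length $2\pi-\pi/\rho$ is not an argument: invoking \eqref{6ge} with $C=1$ just re-asserts that $M$ is locally balanced, which you already used, and there is no evident test function on the long arc that extracts exactly $|\cos\pi\rho|/\pi$. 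A linear-programming duality would have to reproduce the content of the next paragraph in disguise.

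The paper's route is different and avoids measure-localisation altogether. It bounds $\widehat h_\Delta$ from below pointwise by $\rho$-trigonometric arcs anchored at the points of $M$ (so $\widehat h_\Delta(t)\ge\sin\rho|t|$ on $|t|\le 3\pi/(2\rho)$ in the antipodal case), then integrates. Two short lemmas do the real work: (a) for any $h\in TC_\rho$ one has $\int_x^{x+2\pi/\rho}h\ge 0$, which lets one discard $[\rho-1/2]$ full periods and reduce the integral over $[-\pi,\pi]$ to an integral over an interval of length $2\delta\pi/\rho$ with $\delta=\rho-[\rho-1/2]\in[1/2,3/2]$, whence the value $(2/\rho)(1-\cos\delta\pi)=(2/\rho)(1+|\cos\pi\rho|)$; and (b) in the strict-triple case $\{\alpha,\beta,\gamma\}$, the auxiliary piecewise-cosine minorant $h_{\alpha,\beta,\gamma}$ has its integral minimised by sliding $\beta$ to $\beta^*=\alpha+\pi/\rho$, which reduces to the antipodal case. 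Neither of these ideas appears in your sketch, and they are what makes the bound go through. You also omit the final step showing that $\mathcal D(\mathcal A_\rho)$ is the full interval; the paper does this by the one-parameter family $h_d=\max\{h_*,d\}$, $d\in[-1,1]$, and continuity of $d\mapsto\int h_d$.
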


{
    \begin{theorem}\label{ALS1}
    Let $\mathcal{B}_\rho$ denote the class of all $\rho$-regular sets $\Lambda\subset\mathbb C$ that satisfy:
  \begin{enumerate}

        \item $\Lambda$ is a non-uniqueness set for $\mathcal E_{\rho,1}$;
        \item  for every $\rho$-regular set $\Lambda_1$ with non-zero angular density $\Delta_1$, the set $\Lambda\cup\Lambda_1$ is a uniqueness set for  $\mathcal E_{\rho,1}$.
     \end{enumerate} 
Consider the density map
\[
\mathcal D:\ \mathcal B_\rho\to\mathbb R_{+},\qquad 
\mathcal D(\Lambda)=\Delta_\Lambda([0,2\pi)),
\]
where $\Delta_\Lambda$ is the angular density measure of $\Lambda$.
Then:
\[
\mathcal D(\mathcal B_\rho)=
\begin{cases}
\displaystyle\left[\frac{\sin\pi\rho}{\pi}, \rho\right], & \rho\in(0,\tfrac12],\\[2.2ex]
\displaystyle\left[\frac1\pi\left(\sin\frac{\pi\{2\rho\}}2+[2\rho]\right), \rho\right], & \rho\in(\tfrac12,\infty).
\end{cases}
\]

\end{theorem}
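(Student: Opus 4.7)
My strategy is to translate the two conditions defining $\mathcal{B}_\rho$ into constraints on the indicator $h_\Lambda$ via Theorem~\ref{T3}, and then determine the range of the total mass $\mathcal{D}(\Lambda)=\Delta_\Lambda([0,2\pi))$. Condition (1) amounts to the existence of a non-negative measure $\mu$ (with vanishing $\rho$-th moment when $\rho\in\mathbb{N}$) such that $\max h_{\Delta_\Lambda+\mu}\le 1$; condition (2) says that for every nonzero non-negative admissible measure $\tau$ one has $\max h_{\Delta_\Lambda+\tau}>1$. Taking $\tau$ to be a small positive multiple of $\mu$ in (2) forces $\mu=0$ and $\max h_{\Delta_\Lambda}=1$, so that $\Lambda\in\mathcal{B}_\rho$ is equivalent to $\Delta_\Lambda$ being a \emph{maximal} admissible non-negative measure with $\max h_{\Delta_\Lambda}=1$: no nonzero admissible non-negative $\tau$ satisfies $h_{\Delta_\Lambda+\tau}\le 1$.

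The upper bound $\mathcal{D}(\Lambda)\le\rho$ is immediate: integrating $h_\Lambda\le 1$ over $[0,2\pi]$ and using $h_\Lambda''+\rho^2 h_\Lambda=2\pi\rho\,\Delta_\Lambda$ together with the $2\pi$-periodicity of $h_\Lambda$ gives $\int_0^{2\pi}h_\Lambda\,d\theta=(2\pi/\rho)\mathcal{D}(\Lambda)\le 2\pi$. Equality is attained by the uniform density $\Delta=\tfrac{\rho}{2\pi}\,dm$, whose indicator $h\equiv 1$ is automatically maximal since $\max h_\tau>0$ for every nonzero non-negative $\tau$. The lower bound splits into cases. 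For $\rho\le 1/2$ the kernel $\frac{\pi}{\sin\pi\rho}\cos\rho(\theta-\varphi-\pi)$ is non-negative on the integration arc, so $h_\tau\ge 0$ whenever $\tau\ge 0$, and the minimum mass producing $\max h=1$ is attained by a single point mass of weight $\sin(\pi\rho)/\pi$, whose indicator is $\cos\rho(\theta-\theta_0)\ge 0$; strict positivity of the kernel (with a second-order check at $\rho=1/2$) yields maximality. For $\rho>1/2$ the kernel changes sign, and single point masses fail maximality — one can exhibit a symmetric pair $\tau=c(\delta_{\varphi_1}+\delta_{\varphi_2})$ whose $h_\tau$ vanishes to infinite order at the peak of $h$, so that $h+h_\tau\le 1$ for small $c>0$. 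The extremal configuration combines $[2\rho]$ ``full'' $\rho$-cosine peaks of the $|\cos\rho(\cdot)|$-type occupying angular arc $\pi/\rho$ each, together with a single ``partial'' $\rho$-cosine arc of angular length $\pi\{2\rho\}/\rho$; reading off the curvature mass from \eqref{h''} at the junctions gives total mass exactly $(\sin(\pi\{2\rho\}/2)+[2\rho])/\pi$. Both the optimality of this candidate and the matching lower bound will follow from a Farkas-type duality: at the maximum set $M_{h^\star}$, the only non-negative admissible $\tau$ satisfying $\int K_\rho(\theta,\cdot)\,d\tau\le 0$ for every $\theta\in M_{h^\star}$ is $\tau=0$, and the same linear system bounds the total mass of any maximal $\Delta$ from below.

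Every intermediate density $D\in[D_{\min},\rho]$ is then achieved by convex combination: $\Delta_t=t\,\Delta^\star+(1-t)\,\Delta_{\mathrm{unif}}$ has indicator $t\,h^\star+(1-t)$, still with $\max =1$ on $M_{h^\star}$, and the maximality of $\Delta^\star$ transfers linearly — any $\tau\ne 0$ with $h_\tau\le t(1-h^\star)$ would give $\tau/t$ violating maximality of $\Delta^\star$. So $\mathcal{D}(\Delta_t)$ sweeps continuously from $D_{\min}$ to $\rho$.

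The main obstacle will be the fine analysis for $\rho>1/2$: verifying $\rho$-trigonometric convexity of the ``$[2\rho]$ full peaks plus one partial peak'' candidate at every junction (and the moment condition in the integer case), and executing the Farkas-type duality that simultaneously proves optimality of the mass and maximality of the configuration. This is essentially a $\rho$-trigonometric analogue of the classical geometric fact underlying the $\rho=1$ case: among convex bodies contained in the closed unit disk that are maximal with respect to Minkowski addition, the least perimeter is $4$, attained by the diameter $[-1,1]$ and yielding density $2/\pi$.
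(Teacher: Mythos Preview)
Your overall reduction is right in spirit: membership in $\mathcal B_\rho$ forces $\sigma_U(\Lambda)=\sigma_Z(\Lambda)=1$ together with a maximality property (no nonzero admissible $\tau$ can be added while keeping $\max h_{\Delta_\Lambda+\tau}\le 1$), and from there the upper bound, the case $\rho\le 1/2$, and the intermediate-value interpolation all go through. Your convex-combination family $\Delta_t=t\Delta^\star+(1-t)\Delta_{\rm unif}$ is a pleasant alternative to the paper's $h_d=\max\{h_*,d\}$ and the maximality transfer you sketch is correct.

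The genuine gap is the lower bound for $\rho>1/2$. You describe the extremal candidate correctly, but your proposed ``Farkas-type duality'' is not a proof: it is neither stated precisely nor shown to yield a mass inequality, and you yourself label it the main obstacle. The paper's argument rests on a concrete geometric fact you are missing: maximality of $\Delta_\Lambda$ implies that the contact set $M=\{\theta:\widehat h_\Lambda(\theta)=1\}$ has \emph{no gap of length exceeding $\pi/\rho$} (Lemma~\ref{M-L}). The proof is short---if $[\theta_0-\tfrac\pi{2\rho},\theta_0+\tfrac\pi{2\rho}]\cap M=\varnothing$ one inserts the bump $(1-m)\cos\rho(t-\theta_0)\cdot\mathbf 1_{|t-\theta_0|\le\pi/(2\rho)}$, contradicting maximality---but it is the key step. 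Once $M$ is $\pi/\rho$-dense one picks $0=\gamma_0<\gamma_1<\cdots<\gamma_N=2\pi$ in $M$ with $\gamma_{k+1}-\gamma_k\le\pi/\rho$, bounds $\widehat h_\Lambda$ below by the piecewise-cosine function $H_\Gamma(t)=\max(\cos\rho(t-\gamma_k),\cos\rho(t-\gamma_{k+1}))$ on each $[\gamma_k,\gamma_{k+1}]$, and then uses an elementary comparison (Lemma~\ref{2-3}, already proved for Theorem~\ref{T7}) to show that $\int H_\Gamma$ is minimized when all gaps but one equal $\pi/\rho$. That computation gives exactly $\tfrac1\pi(\sin\tfrac{\pi\{2\rho\}}{2}+[2\rho])$. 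Your proposal contains neither the no-gap step nor any substitute for the integral comparison, so as written the $\rho>1/2$ lower bound is unproved.

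A smaller point: in the integer case your derivation of $\mu=0$ skips over the possibility $k^*\in T_\rho$; you need the $\rho$-balancedness of $\widehat h_\Lambda$ (Theorem~\ref{T2}) to rule out a nontrivial $\rho$-trigonometric correction, as the paper's Lemma~\ref{U} does explicitly.
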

}
\begin{rem}
    Note that for the case $\rho=2$ our bounds in Theorem \ref{ALS1} coincide with those in \cite[Theorem 2]{ALS}, which is somewhat surprising, given the considerably more "massive" perturbation here.
\end{rem}

    At the end of the paper, in Section \ref{Fock}, we present an application of our results to random zero sets in Fock-type spaces.

    }

\section{Proofs of Theorems \ref{T1} and \ref{T3}}
By our conditions on $\Lambda$ and by Levin \cite[Chapter\ I, Sect.~10]{Levin} we can define an entire function~$W_\Lambda$:
\begin{itemize}
    \item in the case of $\rho\in\mathbb N$
    \begin{equation}   \label{W1}
W_\Lambda(z)=\exp\left(\displaystyle-\frac1\rho \sum_k\frac{z^\rho}{\lambda_k^{\rho}}\right)\prod_{\lambda_k\in\Lambda} G\left(\frac{z}{\lambda_k};\rho\right);
\end{equation}
 \item in the case of $\rho\notin\mathbb N$
    \begin{equation}   \label{W2}
W_\Lambda(z)=\prod_{\lambda_k\in\Lambda} G\left(\frac{z}{\lambda_k};[\rho]\right),
\end{equation}
where
$\displaystyle G(w; d) :=(1-w) \exp(w+\frac{w^2}{2}+\dots+\frac{w^d}{d}) . $
\end{itemize}

 Let  $f\in \mathcal E_{\rho,\sigma}$ be such that $f^{-1}\{0\}=\Lambda$.
By the Hadamard  factorization  theorem, $f$ can be represented as an Hadamard product
    $$f=e^{P_N}W_\Lambda,$$
where $P_N$ is a polynomial of degree $N\le[\rho]$.

Recall that the {\bf indicator} of an entire function $f$ of order 
$\rho$ is defined by
$$h_f(t):=\limsup_{r\to\infty}\frac{\log |f(re^{it})|}{r^\rho},\qquad t\in[0,2\pi).$$
Moreover, due to the Levin-Pfluger theory \cite[Chapter III, Sect.3, Theorem 4]{Levin}, if the zero set of $f$ is $\rho$-regular, there exists an exceptional set $E$, the union   of disks with zero linear density, such that  
\begin{gather*}
    h_f(t)=\lim_{\substack{
r\to\infty\\ re^{it}\notin E}}\frac{\log|f(re^{it})|}{r^\rho}.
\end{gather*}
One can think of the exceptional set $E$ as the union of small neighbourhoods of the zeros of $f$, where supremum in each disk is governed by the maximum principle. 

Furthermore, by \cite[Chap.~II, Sect.~1, Theorem~1, Theorem~2]{Levin}, outside $E$ the indicator $h_f$ satisfies the following relation:
\begin{gather*}
    h_f(t)=
\begin{cases}
    h_\Lambda(t),\;\;&\rho\notin\mathbb N,\\
    h_\Lambda(t) + a\cos\rho t+b\sin\rho t,\; a,b\in \mathbb R,\;\;&\rho\in\mathbb N.
\end{cases}\end{gather*}

If $\rho\notin \mathbb N$ the type of the function $f$ is uniquely determined by the set $\Lambda$ and  $ \sigma_Z(\Lambda)=\displaystyle\max_{[0,2\pi]}h_\Lambda(t).$

In the case of $\rho\in \mathbb N$, the type of the function also depends  on the exponential factor $e^{P_N(z)}$. Let us assume that $N=\rho$ (allowing the leading coefficient to be zero). Multiplying the function $W_\Lambda$ by an exponential factor $e^{(A-iB)z^{\rho}}$ we do not change its set of zeros, while its indicator changes by a $\rho$-trigonometric function $k(t)=A\cos\rho t+B\sin\rho t.$
Hence, for every $k\in T_\rho$ we obtain a function $\widetilde f\in\mathcal E_{\rho, \sigma} $ such that $\widetilde f^{-1}\{0\}=\Lambda$ and 
$h_{\widetilde f}=h_\Lambda+k$, varying the leading coefficient of the polynomial $P_N(z)$. Therefore, $$\sigma_Z(\Lambda)\le  \inf_{k\in T_\rho}\max_{t\in[0,2\pi]}
[h_\Lambda(t)+k(t)].$$ On the other hand, it follows from the compactness argument that there exists $k_{\Lambda}\in T_\rho$ such that 
$$\max_{t\in[0,2\pi]}
[h_\Lambda(t)+k_{\Lambda}(t)]  =\min_{k\in T_\rho}\max_{t\in[0,2\pi]}[h_\Lambda(t)+k(t)].$$
Consequently, we get  $$\sigma_Z(\Lambda)=  \min_{k\in T_{\rho}}\max_{t\in[0,2\pi]}
[h_\Lambda(t)+k(t)].$$

Theorem \ref{T1} is proved.\hfill $\Box$

We now proceed to the proof of Theorem \ref{T3}. 

Let $\rho>0$.  
By the definition, the set $\Lambda$ is {\it not} a uniqueness set for  $\mathcal E_{\rho,\sigma}$ if and only if there exists an entire function $g$ (a multiplicator), such that $g\cdot W_\Lambda\in \mathcal E_{\rho,\sigma}.$
Note, that if such a function exists, then it has a finite type with respect to the order $\rho$, and, by Levin theorem on the indicator of the product of two entire functions { \cite[Chapter\ III, Sect.~4, Theorem\ 5]{Levin}}, we have 
$$h_{g\cdot W_\Lambda}=h_g+h_{W_\Lambda}.$$
So, the existence of a function $g$, such that 
$$\max_{t\in[0,2\pi]}h_{g\cdot W_\Lambda}(t)\le\sigma,$$ is necessary and sufficient for the set $\Lambda$ to be a non-uniqueness set for $\mathcal E_{\rho,\sigma}$.
If we can find a $\rho$-trigonometrically convex function $k$
such that 
$$\max_{t\in[0,2\pi]}\bigl(k(t)+h_{W_\Lambda}(t)\bigr)\le\sigma,$$
 then we can take any function $g$ of completely regular growth with indicator $h_g=k$. Thus, we substitute the problem of finding a multiplier $g$ with the problem of finding a $\rho$-trigonometrically convex function $k$.
It follows that  $$\sigma_U(\Lambda)= \inf_{k\in TC_{\rho}} \max_{t\in[0,2\pi]}[h_\Lambda(t)+k(t)].$$

Note that for $\rho\le 1/2$ all $\rho$-trigonometrically convex functions are non-negative, so we  cannot lower the maximal value of the  function $h_\Lambda$ by adding a $\rho$-trigonometrically convex function. Hence, for $\rho\le 1/2$ we have
$$\sigma_U(\Lambda)=\sigma_Z(\Lambda).$$
Let us show that this equality also holds for the case $\rho =1.$
Suppose that $\sigma_U(\Lambda)<\sigma_Z(\Lambda)$. That is, $\exists k^*\in TC_{1}$ such that 
\begin{equation}
    \label{p=1}\max_{t\in[0,2\pi]}\bigl(h^*_{\Lambda}(t)+k^*(t)\bigr)=\sigma_U(\Lambda)<\sigma_Z(\Lambda)=  \max_{t\in[0,2\pi]}h^*_\Lambda(t).\end{equation}
where $h^*_{\Lambda}$ is the 1-balanced modification of $h_\Lambda$.

As a continuous function, $k^*$ has at least one point of maximum on the interval $[0,2\pi]$. Then, due to the basic properties of the  trigonometrically convex functions \cite[Chapter\ I, Sect.~16]{Levin} there is a closed interval $J$ of the length $\pi$ such that $k^*(t)\ge 0 \;\;\forall t\in J$. 

It follows from the definition of $1$-balanced function, that every closed subinterval of the interval $[0,2\pi]$ of the length $\pi$ contains at least one point of maximum of the function $h^*_\Lambda$, in particular, there exists $t_M\in J$: $$h^*_\Lambda(t_M)=\displaystyle\max_{t\in[0,2\pi]}h^*_\Lambda(t).$$ Then, by (\ref{p=1}), 
$$h^*_\Lambda(t_M)=\max_{t\in[0,2\pi]}h^*_\Lambda(t)>\max_{t\in[0,2\pi]}(h^*_\Lambda(t)+k^*(t))\ge h^*_\Lambda(t_M)+k^*(t_M)\ge h^*_\Lambda(t_M),$$
 and we get a contradiction.

 Theorem \ref{T3} is proved.\hfill $\Box$

\section{Proof of Theorem \ref{T2} and Corollary \ref{Cor1}}

We start with the proof of Theorem \ref{T2} and prove Corollary \ref{Cor1} at the end of this section.

\paragraph {\bf Part A}

{\it (i)$\Rightarrow$(ii)}

    Let $h\in TC_\rho$ be $\rho$-balanced. In other words, for any $\alpha\in\mathbb R$
    each of the sets
    $$A_1(\alpha):=\bigcup_{k\in\mathbb Z} \left[\alpha+\frac{2\pi k} \rho,\alpha+\frac{\pi  (2k+1)}\rho\right) $$
    and $$ A_2(\alpha):=\bigcup_{k\in\mathbb Z} \left[\alpha+\frac{\pi (2k-1)} \rho,\alpha+\frac{2\pi k}\rho\right) $$
    has a non-empty intersection with the set $M_h$.
    
     Suppose that condition {\it (ii)} does not hold, then there is a function $k\in T_\rho,$ $k(t)=C\sin \rho(t-t_0)$, where $C, t_0\in\mathbb R,$ such that $$\displaystyle\max_{t\in[0,2\pi]} (h(t)+k(t))<\max_{t\in[0,2\pi]} h(t).$$

        Consider now two points { $\alpha_1\in A_1(t_0)\cap M_h, \alpha_2\in A_2(t_0)\cap M_h,$}
       then
       $$k(\alpha_1)\cdot k(\alpha_2)=C^2\sin \rho(\alpha_1-t_0)\sin \rho(\alpha_2-t_0)\le 0.$$
       Therefore, 
                           either $h(\alpha_1)+k(\alpha_1)\ge h(\alpha_1)=\displaystyle\max_{t\in[0,2\pi]}  h(t),$
                            or
       $h(\alpha_2)+k(\alpha_2)\ge h(\alpha_2)=\displaystyle\max_{t\in[0,2\pi]}  h(t)$. 
       
       In either case, we arrive at a contradiction.
           
 {\it (ii)$\Rightarrow$(i)}
       
       Now, let us suppose that $h$ is not $\rho$-balanced, that is, the condition {\it(i)} is not satisfied. 
Then there exists an $\alpha$ such that 
$M_h\subset A_1(\alpha).$
Without loss of generality we can suppose that $\alpha=0, $
and hence $M_h\subset A_1(0)$.  
 Moreover, since $M_h$ is a compact set, we can suppose that for some $\eps>0$ 

$$M_h\subset A_1^\eps:=\bigcup_{k\in\mathbb Z} \left(\frac{2\pi k} \rho+\eps,\frac{\pi  (2k+1)}\rho-\eps\right) .$$
Define also $A_2^\eps:=\mathbb R\setminus A_1^\eps.$
Put
$$a:= \max_{t\in\mathbb R} h(t);\;\;\;b:= \sup_{A^\eps_2} h(t).$$
From the construction, it follows that $a>b$. 
Consider the function
$$\widetilde h(t):=h(t) - \frac{a-b}2\sin \rho t. $$
We have
\begin{itemize}
    \item if $t\in A_1^\eps,$ then $$\widetilde h(t)\le a-\frac{a-b}{2} \sin\rho\epsilon<a,$$
\item
if $t\in A_2^\eps$
$$\widetilde h(t)\le b+\frac{a-b}2=\frac{a+b}2<a.$$
\end{itemize}
So, the condition {\it (ii)} is not satisfied.

\paragraph {\bf Part B}
Recall the definition of the function $h^*$ given in the introduction:
$$h^*(t)=\max_{j=0,1,\ldots, \rho-1} h\left(\frac {t+2\pi j}{\rho}\right),$$
It is a ${2\pi}$-periodic 1-trigonometrically convex function that serves as a support function for the convex set $I^*$ with circumradius $R^*$ and circumcenter  $C^*\in\mathbb C$. By taking the geometrical sum  of the set $I^*$ and the point $-C^*$ (that is, shifting the set $I^*$ so that its center coincides with the origin), we obtain a convex set with  a supporting  function $h^*_0(t)=h^*(t)-{\rm Re} (C^* e^{it})$. 
 Note, that 
 $$h^*_0(t)=
    \max_{j=0,\dots,p-1}\left(\widetilde h\left(\frac{t+ 2\pi j}{\rho}\right)\right),
 $$
where
$$\widetilde h(t)=
    h(t)-{\rm Re} (C^* e^{ipt}).
$$
Furthermore, the function $\widetilde h$ is $\rho$\hyp{balanced}, since  the function $h^*_0$ is $1$\hyp{balanced}, and the set   $\{e^{i\rho t}: t\in M_{\widetilde h}\}$  coincides  with the set $\{e^{i t}: t\in M_{ h_0^*}\}$. Hence, by part A, we have
$$
\min_{k\in T_\rho}\max_{t\in [0,2\pi], }(h(t)+k(t))=\max_{t\in [0,2\pi]} h(t)=\max_{t\in [0,2\pi]}\widetilde h(t)=R^*.$$
\hfill $\Box$

\paragraph{\bf Example 1.}
Let $ \rho\in\mathbb N$.
Consider the following family of probability measures on the interval $[0,2\pi]$
$$\Delta_1^{n}:=\frac1n\sum_{j=1}^n\delta_{(2j+1)\pi/n},\;\;\;\;\;n\in\mathbb N, n\ge 2\rho,$$
and let $\Delta_1^{\infty}$ be the normalized Lebesgue measure on $[0,2\pi].$
Let $\Lambda_n$ and $\Lambda_\infty$ be  $\rho$-regular sets such that $\Delta_{\Lambda_n}=\Delta_1^{n}$ and $\Delta_{\Lambda_\infty}=\Delta_1^{\infty}.$
Then 
$$
 \displaystyle h_{\Delta_1^{n}}(t)=\frac{\pi }{n\sin\frac{\rho \pi}{n}}\max\left\{\cos \rho\bigl(t-\frac{2\pi j}{n}\bigr),\;\;j=0,\ldots,n-1\right\},$$
 and  
$h_{\Delta^{\infty}_1}(t)=\frac1\rho.$
It is easy to see that all these functions are $\rho$-balanced, hence,  we have
{
$$\sigma_Z(\Lambda_n)=\frac{\pi }{n\sin\frac{\rho \pi}{n}}.$$}
As $n$ tends to infinity and  the set $\Lambda$ spreads evenly over the interval $[0,2\pi]$, approaching the uniform distribution $\Delta^{\infty}_1$, we obtain 
$$\lim_{n\to\infty}\sigma_Z(\Lambda_n)=\frac1\rho=\sigma_Z(\Lambda_\infty).$$

We now proceed to the proof of Corollary \ref{Cor1}.
The first  statement of the corollary is an immediate consequence of Theorem \ref{T1} and Theorem \ref{T2}.

To show that in the definition of the critical zero set type we can replace infimum by minimum, note that the function $\widehat h$ is an indicator of the entire function $f_\Lambda(z)=e^{Cz^\rho}W_\Lambda(z)$  with an appropriate value of $C\in\mathbb C$.  
Since  $\widehat h(t)=h_{f_\Lambda}(t)\le R_{\Delta}^*=\sigma_Z(\Lambda),$ we have $f_\Lambda\in\mathcal E_{\rho, \sigma_Z(\Lambda)}$, while $f^{-1}\{0\}=~\Lambda.$

\hfill $\Box$

\section{Proof of Theorem \ref{T4}} 

Here, given $\rho\in (1/2,\infty)\setminus\{1\},$ we will provide some examples of non-negative measures $\Delta$ such that for any $\rho$-regular set  $\Lambda$ with $\Delta_\Lambda=\Delta$ we have
$\sigma_U(\Lambda)<\sigma_Z(\Lambda)$.

\paragraph{\bf Example 2.}

Let $\rho\in\mathbb N, \rho \ge 3.$
Put
$$\Delta_2:=\delta_{\pi/\rho}+\delta_{2\pi/\rho}+\delta_{4\pi/\rho}+\delta_{5\pi/\rho}.$$
Then (see  Fig.~\ref{graph 101})$$
h_{\Delta_2}(t)=\begin{cases}
    2\pi\cos \rho t,&t\in[\frac\pi\rho,\frac{2\pi}\rho]\cup[\frac{4\pi}\rho,\frac{5\pi}\rho];\\
    0,&{\rm elsewhere.}
\end{cases}
$$

\begin{figure}[H]
\begin{center}

{
\begin{tikzpicture}[declare function={
    g(\x)=
      \x< pi ? 0:
         (\x< 2*pi ? -sin(\x r): 
      (\x<  4*pi ? 0: 
 (\x< 5*pi ? sin(\x r):
      (\x< 8*pi ? 0:
      (\x<9*pi  ?  0:
      (\x<10*pi  ?  -sin(\x r):
     0)))));}]
    \begin{axis}[
      grid=both, 
      grid style={line width=0.1pt, draw=gray!75},
      axis lines=center,
      axis line style={black},
  unit vector ratio = 1 8,
     xmin=-1, xmax=32,
     ymin=-0.5, ymax=1.3,
      xtick=\empty,
      ytick=\empty,
      every axis plot/.append style={line width=1pt, color=black},
          ]
        \addplot[ thick, samples at={0,0.1,...,35}] {g(x)}; 
\fill(pi,0) circle (1pt) node[below] {\tiny $\displaystyle \frac{\pi}{\rho}$} ;
\fill(2*pi,0) circle (1pt) node[below] {\tiny $\displaystyle \frac{2\pi}{\rho}$} ;
\fill(3*pi,0) circle (1pt) node[below] {\tiny $\displaystyle \frac{3\pi}{\rho}$} ; 
\fill(4*pi,0) circle (1pt) node[below] {\tiny $\displaystyle \frac{4\pi}{\rho}$} ; 
\fill(5*pi,0) circle (1pt) node[below] {\tiny $\displaystyle \frac{5\pi}{\rho}$} ; 
\fill(6*pi,0) circle (1pt) node[below] {\tiny $\displaystyle \frac{6\pi}{\rho}$}; 
\fill(7*pi,0) circle (1pt) node[below] {\tiny $\displaystyle {\ldots}$}; 
\fill(8*pi,0) circle (1pt) node[below] {\tiny $\displaystyle {2\pi}$}; 
\fill(0,0) circle (1pt); 
\fill(0,1) circle (1pt) node[above] {\tiny $\displaystyle {\;\;\;\;\;\;2\pi}$}; 
\draw [dashed](0,1) - - (32,1) ;
   \end{axis}
        \end{tikzpicture}}
   \caption{Illustration to Example 2: $y=h_{\Delta_2}(t). $
   }\label{graph 101}

\end{center}   
\end{figure}
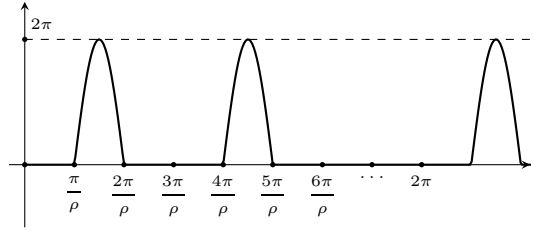

{ Note, that the function $h_{\Delta_2}$ is $\rho$-balanced, since
 $0\in{\rm conv}\{e^{\frac{3\pi}{2} i}, e^{\frac{9\pi}{2}i}\}.$}
Hence, $$\sigma_Z(\Lambda)=\max_{[0,2\pi]}\left( h_{\Delta_2}(t)\right)={2\pi}$$ for every $\rho$-regular set $\Lambda$ such that $\Delta_\Lambda=\Delta_2$. 

Let
$$k(t)=
    \begin{cases}\displaystyle
        {\pi}\sin\rho t, &\text{if } t\in[0,3\pi/\rho];\\
       \displaystyle{-\pi}\sin\rho t, &\text{if } t\in[3\pi/\rho, 6\pi/\rho].
    \end{cases}$$
Then $\displaystyle\max_{[0,2\pi]}(h_{\Delta_2}(t)+k(t))=\pi<\sigma_Z(\Lambda)$. Hence, 
{$$\sigma_U(\Lambda)\le \pi<\sigma_Z(\Lambda)=2\pi.$$
}

\paragraph{\bf Example 3.} Let $ \rho= 2.$
The construction from the previous example does not work for this case. Let
 $\Delta_3 =\delta_0+\delta_{2\pi/3}+\delta_{4\pi/3}.$
    Then (see Fig.~\ref{Picture1})
    $$h_{\Delta_3}(t)=
        \frac{2\pi}{\sqrt3}\cos 2(t-\pi/3), \text{if } t\in[0,2\pi/3],\;\;\;h_{\Delta_3}(t+2\pi/3)=h_{\Delta_3}(t).$$

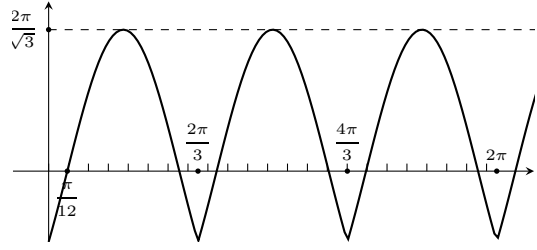
\begin{figure}[H]
\begin{center}

{
\begin{tikzpicture}[declare function={
    g(\x)=\x<2*pi/3 ? cos(2*(\x-pi/3) r) : 
          (\x< 4*pi/3 ? cos(2*(\x-pi) r):
      (\x< 2*pi ? cos(2*(\x-5*pi/3) r): 
      (\x<  9*pi/4 ? cos(2*(\x-pi/3) r): 
      cos(2*(\x-pi/3) r))));}]
    \begin{axis}[
      grid=both, 
      grid style={line width=0.1pt, draw=gray!75},
      axis lines=center,
      axis line style={black},
 unit vector ratio = 1 2,   
     xmin=-0.5, xmax=26*pi/12,
     ymin=-0.5, ymax=1.2,
      xtick=\empty,
      ytick=\empty,
      every axis plot/.append style={line width=1pt, color=black},
          ]
        \addplot[ thick, samples at={0,0.05,...,8}] {g(x)}; 
\fill(pi/12,0) circle (1pt) node[below] {\tiny$\displaystyle \frac\pi{12}$} ;
\fill(8*pi/12,0) circle (1pt) node[above] {\tiny$\displaystyle \frac{2\pi}{3}$} ;

\fill(4*pi/3,0) circle (1pt) node[above] {\tiny$\displaystyle \frac{4\pi}{3}$} ; 
\fill(2*pi,0) circle (1pt) node[above] {\tiny$\displaystyle {2\pi}$}; 
\fill(0,1) circle (1pt) node[left] {\tiny$\displaystyle {\frac{2\pi}{\sqrt 3}}$}; 
\draw[decorate,decoration={border, angle=90 , segment length=0.26179938779914943653855361 cm,
amplitude=0.1cm}] (0,0) - - (2*pi,0);
\draw [dashed](0,1) - - (8,1) ;
   \end{axis}
        \end{tikzpicture}}

\end{center}

   \caption{Illustration to Example 3: $y=h_{\Delta_3}(t). $ 
   }\label{Picture1}
   
\end{figure}

This function is 2-balanced. So, { $\sigma_Z(\Lambda)=\frac{2\pi}{\sqrt 3}$ }for every $2$-regular set $\Lambda$  with $\Delta_\Lambda=\Delta_3.$
Now take
$$k(t)=\dfrac12 h_{\Delta_3}(t+\dfrac\pi3).$$   
    Then (see Fig.~\ref{Picture2}) { \begin{gather*}
        \max_{t\in[0,2\pi]}(h_{\Delta_3}(t)+k(t))=
\frac{\pi}{\sqrt 3}\max_{t\in[0,\frac\pi3]}(2\cos2(t-\pi/3)+\cos 2t)\\=\frac{\pi}{\sqrt 3}\max_{t\in[0,\frac\pi3]}(\sqrt3\sin2t)=\pi.\end{gather*}
        Hence, for every 2-regular set $\Lambda$ with $\Delta_\Lambda=\Delta_3$ we have $$\sigma_U(\Lambda)\le\pi<\sigma_Z(\Lambda)=\frac{2\pi}{\sqrt3}.$$  }
    
     \begin{figure}[H]\label{graph2}
 \begin{center}
{
\begin{tikzpicture}[declare function={
    g(\x)=\x<2*pi/3 ? cos(2*(\x-pi/3) r) : 
      (\x< 4*pi/3 ? cos(2*(\x-pi) r):
  (\x<  2*pi ? cos(2*(\x-5*pi/3) r): 
      (\x<26*pi/12  ?cos(2*(\x-7*pi/3) r):
      cos(2*(\x-7*pi/3) r))));
    f(\x)=\x<pi/3 ? 0.5*cos(2*\x r) : 
   (\x<pi ? 0.5*cos(2*(\x-2*pi/3) r) :  
    (\x< 5*pi/3 ? 0.5*cos(2*(\x-4*pi/3) r): 
      0.5*cos(2*(\x-6*pi/3) r)
      )));}]
    \begin{axis}[
      grid=both, 
      grid style={line width=0.1pt, draw=gray!75},
      axis lines=center,
      axis line style={black},
     xmin=-0.8, xmax=26*pi/12,
     ymin=-0.2, ymax=0.6,
      xtick=\empty,
      ytick=\empty,
      every axis plot/.append style={line width=1.5pt, color=black},
          ]
       \addplot[dashed, thin, samples at={0,0.05,...,8}] {g(x)/2};  
        \addplot[dashed,thin,samples at={0,0.05,...,8}] {f(x)/2}; 
           \addplot[thick, samples at={0,0.05,...,8}] {f(x)/2+g(x)/2};
     
\fill(pi/3,0) circle (1pt) node[above] {\tiny$\displaystyle \frac{\pi}{3}$} ;
\fill(pi,0) circle (1pt) node[above] {\tiny$\displaystyle \pi$} ;
\fill(5*pi/3,0) circle (1pt) node[above] {\tiny$\displaystyle \frac{5\pi}{3}$} ; 
\fill(24*pi/12,0) circle (2pt) node[above] {\tiny$\displaystyle {2\pi}$}; 
\fill(0,1.732050807568877293527446341/4) circle (1pt) node[left] {\tiny$\displaystyle {\pi}$}; 
\fill(0,1/2) circle (1pt) node[left] {\tiny$\displaystyle {\frac{2\pi}{\sqrt3}}$};
\fill(0,1/4) circle (1pt) node[left] {\tiny$\displaystyle {\frac{\pi}{\sqrt3}}$};
\draw[decorate,decoration={border, angle=90 , segment length=0.26179938779914943653855361 cm,
amplitude=0.1cm}] (0,0) - - (2*pi,0);
\draw [dashed](0,1.732050807568877293527446341/4) - - (8,1.732050807568877293527446341/4) ;
\draw [dashed](0,1/2) - - (8,1/2) ;

   \end{axis}
        \end{tikzpicture}}  
         \end{center}
            \caption{Illustration to Example 3:  $y=h_{\Delta_3}(t)+k(t) $ (bold line)  }
            \label{Picture2}
\end{figure}
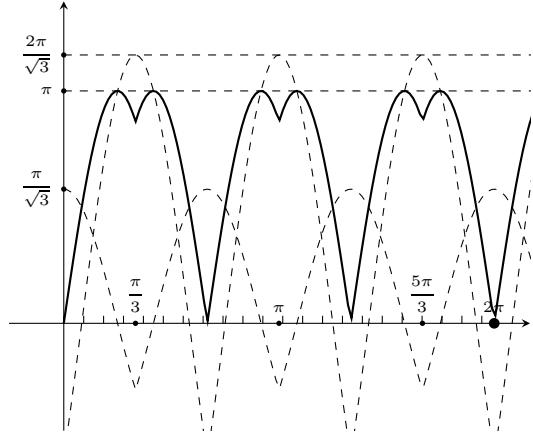

\paragraph{\bf Example 4: $ \rho\in (1,\infty)\setminus\mathbb N$}

 Let $\Delta_4=\delta_{-\frac{\pi}{2\rho}}+\delta_\frac{\pi}{2\rho}$, then
$$h_{\Delta_4}(t)=2\pi\cos(\rho t)\cdot \mathbf{1}_{[-\frac{\pi}{2\rho},\frac{\pi}{2\rho}]}.$$ 

Put $$k(t):=-\pi\cos \rho t \cdot \mathbf{1}_{[-\tau,\tau]},$$
where $\tau=\min(\frac{3\pi}{2\rho}, \pi)$.
Then { $$\max_{t\in[0,2\pi]}(h_{\Delta_4}(t)+k(t))=\pi<\max_{t\in[0,2\pi]}h_{\Delta_4}(t)=2\pi.$$}
        Hence, $\sigma_U(\Lambda)\le\pi<\sigma_Z(\Lambda)$ for every $\Lambda$ with $\Delta_\Lambda=\Delta_4$. 

   {\paragraph{\bf Example 5: $\rho\in(1/2,1)$.} 
   Let $\Delta_5=\delta_\pi$, then $h_{\Delta_5}(t)=\displaystyle\frac{\pi\cos \rho t}{\sin\pi \rho},\;\;t\in[-\pi,\pi].$
   
   Put
$$
k(t):=-\pi\cdot\frac{\cos\pi\rho}{\sin \pi\rho} \cdot\cos\rho(\pi-|t|),\;\;t\in[-\pi,\pi], $$
then
\begin{gather*}
    \max_{t\in[0,2\pi]}(h_{\Delta_5}(t)+k(t))=
\frac{\pi}{\sin\pi\rho}\cdot\max_{t\in[0,\pi]}\big(\cos \rho t-\cos\pi\rho\cdot\cos\rho(\pi-t)\big)=
\pi.
\end{gather*}}
        Hence, in this case we get $$\sigma_U(\Lambda)\le\pi<\sigma_Z(\Lambda)=\frac{\pi}{\sin\pi\rho}
        $$ for every $\Lambda$ with $\Delta_\Lambda=\Delta_5$. 

    As we will see later, all the estimates of $\sigma_U$ we obtained in this section for $\Delta_\Lambda=\Delta_n, n=1,2,3,4, 5$, occur  to be sharp. We return to these examples in Section~\ref{sec6}.

\section{Proof of Theorem \ref{T5}}

{\it (ii)$\Rightarrow$(i)}

From  the definition of a locally $\rho$-balanced set \eqref{loc-bal-geom} it follows that  set $M$ is locally $\rho$-balanced if and only if there exists a triple  of points $\{\alpha,\beta,\gamma\}\subset M$ such that
\begin{equation}\label{loc-bal}
0<\beta-\alpha\le\dfrac\pi\rho;\;\;0\le\gamma-\beta<\dfrac\pi\rho;\;\;\gamma-\alpha\ge\dfrac\pi\rho,
    \end{equation}
(here we use a simple observation that $0$ lies in the convex hull of a subset of the unit circle if and only if this subset contains   three points whose convex hull contains $0$).
In particular, in the degenerate 
 case when $\gamma=\beta=\alpha+\dfrac\pi\rho$ the set $M$ is locally $\rho$-balanced.

Note that for any function $k\in TC_\rho$ with $\rho>1/2$ {the set $\{ t:k(t)<0\}$ } is not locally $\rho$-balanced.

     Suppose that   {\it(ii)} does not hold, then there exists a function $k\in TC_{\rho}$ such that 
     \begin{equation}\label{TC}
     \max_{t\in[0,2\pi]} (\widehat h_\Lambda(t)+k(t))<\max_{t\in[0,2\pi]} \widehat h_\Lambda(t).
     \end{equation}
      Therefore, for every $t\in M_{\widehat h_\Lambda}$  we have
$k(t)<0$. Hence,  the function $\widehat h_\Lambda$ is not locally $\rho$-balanced.

{\it (i)$\Rightarrow$(ii)}

  { Note, that
    for the case $\rho=1$ the set is locally 1-balanced if and only if it is 1-balanced, so it suffices to consider the case $\rho\in(1/2,1)\cup(1,\infty)$ only.
}

       Suppose now that $\widehat h_\Lambda$ is not locally $\rho$-balanced. We will show how to construct a $\rho$-trigonometrically convex function $k$ such that $$\max_{t\in[0,2\pi]}\bigl(\widehat h_\Lambda(t)+k(t)\bigr)<\max_{t\in[0,2\pi]}\widehat h_\Lambda(t).$$     
       
      Note that a set $M$ {is not locally $\rho$-balanced}, if and only if it can be covered by a finite union of open disjoint intervals $M\subset \bigcup I_j,\;I_j \subset [0,2\pi ]$, such that the length of each $I_j$ is less than $\pi/\rho$, while the distance between  neighboring intervals is bigger than $\pi/\rho$.
      
 Put $M=M_{\widehat h_\Lambda}\subset[0,2\pi]$ and denote by $\widetilde M$ its $2\pi$-periodization.  Since the set $M$ is { not locally $\rho$-balanced,}
  there exists an open cover $\widetilde M\subset\bigcup_{j=1}^L I_j:=
\mathcal I, $ such that
\begin{itemize}
    \item for each $j$ 
    $$I_j=\bigcup_{k\in\mathbb Z}(\alpha_j+2\pi k,\beta_j+2\pi k),$$
    where $0<\beta_j-\alpha_j<\pi/\rho$;
    \item for each $j=1,\dots, L$
    $$\alpha_{j+1}-\beta_j>\pi/\rho,$$
    where $\alpha_{L+1}:=\alpha_1+2\pi.$
 
\end{itemize}    
    The supplementary  intervals to the set $\mathcal I$ we denote by      $J_j:=[\beta_j,\alpha_{j+1}]$.

       Put 
       $$a:=\max_{t\in[0,2\pi]}\widehat h_\Lambda(t);\;\;\;b:= \sup_{t\notin\mathcal I} \widehat h_\Lambda(t).$$
    From the definition of locally $\rho$-balanced set it follows that $b<a.$

   { 
Let us introduce an auxiliary function
$$s_\rho(t):=\sin\rho t\cdot \mathbf{1}_{[0,\frac\pi\rho]}.$$
Put
$$k(t)=\begin{cases}
   \dfrac{a-b}{2} \max\big(\sin\rho(t-\beta_j),\sin\rho(\alpha_j-t)\big),&t\in I_j;\\[10pt]
    \dfrac{a-b}{2} \max\big(s_\rho(t-\beta_j),s_\rho(\alpha_{j+1}-t)\big),&t\in J_j.
       
\end{cases}$$
Since $0<\beta_j-\alpha_j<\dfrac\pi\rho$, it follows that the function $k$ is {\it strictly  negative} on the set $\mathcal I$.   
On the other hand, $\displaystyle\max_{t\in[0,2\pi]}k(t)=\frac{a-b}{2}.$

 Recall \cite{Levin}, that a $2\pi$-periodic function $h$ is $\rho$-trigonometrically convex if and only if for all $ \alpha<\beta$ it holds
\begin{equation*}
    h'_+(\beta)-h'_-(\alpha)+\rho^2\int_\alpha^\beta h\ge0.
\end{equation*}
Since the function $k$ is continuous, $2\pi$-periodic and  piecewise $\rho$-trigonometric, to justify that $k\in TC_\rho$ it is sufficient to examine the set $\mathcal S$ of all singular points of  $k$ (that is, the points, where $k$ is not differentiable), and to show that for every $s\in\mathcal S$ it holds
\begin{equation}\label{s}
    k'_+(s)\ge k'_-(s),
\end{equation}

First of all, note that all the points $\alpha_j,\beta_j$ are regular points of the function $k$. 

Next, for each $j=1,\dots,L$ the point $\gamma_j:=\dfrac{\alpha_j+\beta_j}{2}$ is the only singular point on the interval $I_j,$ and since $k'_+(\gamma_j)>k'_-(\gamma_j),$ this function is $\rho$-trigonometrically convex on each $I_j$.

Further, if $|J_j|>\dfrac{2\pi}\rho,$ there are two singular points on $J_j$: $\beta_j+\dfrac{\pi}{\rho}$ and $\alpha_{j+1}-\dfrac{\pi}{\rho}$. It is easy to see that in this case the condition \eqref{s} also holds.

Finally, if $|J_j|\le\dfrac{2\pi}\rho,$ there is one singularity on the interval $ J_j$ at the point $\delta_j=\dfrac{\alpha_{j+1}+\beta_j}{2},$ and it holds $k'_+(\delta_j)>k'_-(\delta_j)$. 
Hence, the function $k$ is $\rho$-trigonometrically convex.


Thus, we have constructed a $\rho$-trigonometrically convex function $k$ such that
\begin{enumerate}
    \item[1.] if $t\in \mathcal I,$ then      $k(t)< 0.$
    Therefore,
    $$\widehat h_\Lambda(t) + k(t)<a;$$
\item[2.]
if $t\notin \mathcal I,$ then
$$\widehat h_\Lambda(t) + k(t)\le b+\frac{a-b}2=\frac{a+b}2<a.$$
\end{enumerate}
Hence 
$$\max_{t\in[0,2\pi]}\bigl(\widehat h_\Lambda(t)+k(t)\bigr)<\max_{t\in[0,2\pi]}\widehat h_\Lambda(t).$$     Theorem \ref{T5} is proved.
$\hfill\Box$
}
\paragraph {\bf Example 1 revisited.}
Note that all the functions $h_{\Delta^n_1}$ and $h_{\Delta^\infty_1}$ are { locally $\rho$-balanced  for $n\ge 2\rho$}, hence
$$\sigma_U(\Lambda_n)=\sigma_Z(\Lambda_n)={\frac{\pi }{n\sin\frac{\rho \pi}{n}}},$$
while $\sigma_U(\Lambda_\infty)=\sigma_Z(\Lambda_\infty)=\frac 1\rho.$

\section{Proof of Theorem \ref{T6}}\label{sec6}

We will prove inequalities
\eqref{6+} and \eqref{6ge}   by contradiction.

Suppose that $$\varepsilon:=A_\Lambda-\sigma_U(\Lambda)>0 .$$
Then there exists
 $k\in TC_\rho$ such that 
$$\max_{t\in[0,2\pi]} \bigl(h_\Lambda(t)+k(t)\bigr)<A_\Lambda-\varepsilon/2,$$
Using the fact that $
\displaystyle k(t)+k\bigl(t+\frac{\pi}{\rho}\bigr)\ge 0,$ we get 
$$h_\Lambda(t)+h_\Lambda\left(t+\frac{\pi}{\rho}\right)\le h_\Lambda(t)+h_\Lambda\left(t+\frac{\pi}{\rho}\right)+k(t)+k\left(t+\frac{\pi}{\rho}\right)< 2A_\Lambda-\varepsilon,$$
the derived contradiction proves the inequality \eqref{6+}.

 Suppose now that $\sigma_U(\Lambda)<C,$ then there exists $k\in {TC}_\rho$ such that 
 $$ \widehat{h}_\Lambda(t)+k(t)< C,\;\;\;\forall t.$$
 Therefore,
 $$ k(t)<0,\;\;\;\forall  t:\widehat{h}_\Lambda(t)\ge C.
 $$
Since $k$ is $\rho$-trigonometrically convex,  it follows that the set of its negative values can not contain a locally $\rho$-balanced set, leading us to a contradiction.

Theorem \ref{T6}  is proved.
$\hfill\Box$

\paragraph {\bf Examples 2-5 revisited.}

 \begin{enumerate}
     \item[\bf 2$'$.] Returning to the Example 2, we note that if $\Delta_\Lambda=\Delta_2$ for some $\rho$-regular set $\Lambda$, we have 
 $A_{\Lambda}=\pi$.  Now, due to inequality 
\eqref{6+}, we know that $\sigma_U(\Lambda)\ge\pi,$  and  since it has been shown in Example 2 that this level is achievable, we can conclude $$\sigma_U(\Lambda)=\pi.$$

\item[\bf 3$'$.]  In the same way we can find the value  of $\sigma_U(\Lambda)$ for the $\rho$-regular set $\Lambda $ with $\Delta_\Lambda=\Delta_3$, here we have {
\begin{gather*}
h_{\Delta_3}\left(\frac{5\pi}{12}\right)+h_{\Delta_3}\left(\frac{5\pi}{12}+\frac\pi 2\right)=\frac{4\pi}{\sqrt 3}\cos\frac{\pi}{6}=2\pi\le 
   2 A_\Lambda,\end{gather*}
hence,
$
\sigma_U(\Lambda)=\pi.$

The same result can be obtained by applying the second part of Theorem~\ref{T6}: put $C=\pi$ and consider the set 
$$\{t:h_{\Delta_3}(t)\ge \pi\}=\left[\frac\pi4,\frac{5\pi}{12}\right]\cup\left[\frac{11\pi}{12},\frac{13\pi}{12}\right]\cup \left[\frac{19\pi}{12},\frac{21\pi}{12}\right].$$
Since this set is locally $2$-balanced, we get by \eqref{6ge} that $\sigma_U(\Lambda)\ge \pi,$ and hence $\sigma_U(\Lambda)=\pi$.
}
\item[\bf 4$'$.]  Returning to the Example 4, we also see that 
 for the set $\Lambda $ with $\Delta_\Lambda=\Delta_4$, we have
   $A_\Lambda=\pi$, hence, $\sigma_U(\Lambda)=\pi.$
{\item[\bf 5$'$.]  For the case $\Delta_\Lambda=\Delta_5$  we have
 \begin{gather*}
     h_{\Delta_5}\left(\pi-\frac{\pi}{2\rho}\right)+h_{\Delta_5}\left((\pi-\frac{\pi}{2\rho})+\dfrac{\pi}{\rho}\right)\\
     =h_{\Delta_5}\left(\pi-\frac{\pi}{2\rho}\right)+h_{\Delta_5}\left(-\pi+\frac{\pi}{2\rho}\right)\\
     =  
     \frac{2\pi}{\sin\pi\rho}\cos(\pi\rho -\frac{\pi}{2}) =2\pi
 \le 2A_\Lambda.
 \end{gather*}
      Hence, $\sigma_U(\Lambda)=\pi.$}
      \end{enumerate}

 A natural question arises: do estimates \eqref{6+} and/or \eqref{6ge} always provide us an exact value for $\sigma_U(\Lambda)$? Unfortunately, this is not the case, as the following examples show.

{
\paragraph {\bf Example 2$''$.}
As has been shown in Example 2$'$, inequality \eqref{6+}
provides us with an exact bound on
$\sigma_U(\Lambda),$
while from the inequality \eqref{6ge}
 we cannot derive any meaningful bound, as the only value of
 $C\ge 0$ for which the set $\{t: h_{\Delta_2}(t)\ge C\}$ is locally $\rho$-balanced is $C=0.$

\paragraph{\bf Example 6.}
Let now $\rho=1$. Consider  some Reuleaux triangle $\mathcal R$ with 
width  $W=h(t)+h(t+\pi)$ and circumradius $R>W/2$. Suppose that its circumcenter is located at the origin. All three vertices of $\mathcal R$ lie on the circumcircle.

Let $h_6$  be the support function of $\mathcal R$, the corresponding measure we denote by $\Delta_6$. Then, if $\Delta_\Lambda=\Delta_6$, we have $\sigma_Z(\Lambda)=R$, and $A_\Lambda:= W/2$, so there is a strict inequality in \eqref{6+}, while the inequality  \eqref{6ge} gives us a sharp bound, since the convex hull of the vertices of the Reuleaux triangle contains the origin, hence the function $h_6$ is $1$-balanced, and for $\rho=1$ it follows that it is  locally $1$-balanced.

}



{\section{Proof of Theorem \ref{T7}}

Fix some $\Lambda \in \mathcal A_\rho$ and put $\Delta:=\Delta_\Lambda.$
Note that the conditions {\it (i)} and 
{\it (ii)} from the definition of $\mathcal A_\rho$ are equivalent to 
$$\sigma_U(\Lambda)=\sigma_Z(\Lambda)=1.$$

Hence, we have $\displaystyle\max_{t\in[0,2\pi]}\widehat h_\Lambda(t)=1, $ 
where, as before,  $\widehat h_\Lambda$ denotes  the  $\rho$-balanced modification of $h_\Delta$ defined by \eqref{lb-m}.

    The results in \cite[Chapter\ IV, Sect.~1]{Levin} (or just formula \eqref{h''}) give us that 
\begin{equation}\label{Dens}
    \mathcal D(\Lambda)=\frac{\rho}{2\pi}\int_{-\pi}^{\pi}\widehat h_\Delta(t)\;{\rm d}t,\end{equation}
 which immediately gives the upper bound:
$$\mathcal D(\Lambda)\le \rho=\frac{\rho}{2\pi}\int_{-\pi}^{\pi}\;{\rm d}t.$$

This estimate is achieved for any $\Lambda$ whose angular distribution is uniform, i.e.
$\Delta_\Lambda=\Delta^*([\alpha,\beta])=\dfrac{\rho}{2\pi}\cdot(\beta-\alpha)$
for all $0\le \alpha<\beta\le 2\pi$. The corresponding $\rho$-trigonometrically convex function is
$$h^*(t)=1.$$

Put $$M:=\{t\in\mathbb R:\widehat h_\Delta(t)=1\};$$
 this set is non-empty.

We now proceed to the proof of the lower bound, which is more involved. We subdivide it into three  parts depending on the value of $\rho$.

\begin{itemize}[leftmargin=*]
    \item{$\rho\in (0,1/2].$} 
       
Basic properties of  $\rho$-trigonometrically convex function imply that if $\theta_0\in M$, then for every $t,$ such that $|t-\theta_0|\le\frac\pi\rho$, we have
\begin{equation}
    \label{en}
   \widehat h_\Delta(t)\ge\cos\rho(t-\theta_0). 
\end{equation}
      Without loss of generality, we assume that $0\in M$.     
    Applying \eqref{en}, we get $$\int_{-\pi}^{\pi}\widehat h_\Delta(t){\rm d}t\ge\int_{-\pi}^{\pi}\cos\rho t\;{\rm d}t=\frac{2\sin\pi\rho}{\rho}.$$ 
    This bound is achieved for 
    $h_*(t)=\cos\rho t$, the corresponding measure is
    $$\Delta_*=\dfrac{\sin\pi\rho}{\pi}\cdot\delta_\pi.$$

   \item{ $\rho\in (1/2,1]$.} 

Since $\sigma_U(\Lambda)=\sigma_Z(\Lambda),$ by Theorem \ref{T5}, the set $M$ is locally $\rho$-balanced (see \eqref{loc-bal}). 
 Therefore,  without loss of generality, we can assume that there exists $\alpha\in \left[\pi-\dfrac\pi{2\rho}, \dfrac\pi{2\rho}\right],$ such that $\pm\alpha\in M$.
It follows that 
$$\widehat h_\Delta(t)\ge \cos\rho(|t|-\alpha),\;\;\;|t|\le\pi.$$

The auxiliary function 
\begin{gather*}
    f(\alpha)=\int_{-\pi}^\pi\cos\rho (|t|-\alpha)\; {\rm d}t=\frac2\rho\left(\sin\rho(\pi-\alpha)+\sin\alpha\rho\right)\\=
    \frac4\rho\sin\frac{\pi\rho}{2}\cos\left(\frac{\pi\rho}{2}-\alpha\rho\right)
    \end{gather*}
    is concave on the interval $\left[\pi-\dfrac\pi{2\rho},\dfrac\pi{2\rho}\right]$,  therefore its minimal value is attained on the boundary of this interval. Hence,
    $$f(\alpha)\ge \frac4\rho\sin^2\frac{\pi\rho}{2},$$
and therefore
$$\mathcal D(\Lambda)\ge \frac{\rho}{2\pi}\int_{-\pi}^{\pi}\widehat h_{\Delta}(t){\rm d}t\ge \frac{\rho}{2\pi} f(\alpha)\ge\frac{2}{\pi}\sin^2\frac{\pi\rho}{2}=\frac{1+|\cos\pi\rho|}{\pi}.$$

The bound is achieved for the function $h_*(t)=\sin\rho |t|,$ the corresponding angular density is given by
$$\Delta_*=\frac1\pi\left( \delta_0+|\cos\rho\pi|\cdot\delta_\pi\right).$$

\item{$\rho>1$.}{

Put 
$$N=[\rho-1/2]=\begin{cases}[\rho]-1,&\{\rho\}\in[0,1/2],\\
[\rho],&\{\rho\}\in(1/2,1),
\end{cases}$$
and $\delta:=\rho-N\in[1/2,3/2].
$

As in the previous case, since 
$\sigma_U(\Lambda)=\sigma_Z(\Lambda),$ by Theorem \ref{T5}, the set $M$ is locally $\rho$-balanced. Recall  that in this case at least one of the following conditions is satisfied (see \eqref{loc-bal}):
\begin{enumerate}
    \item $\exists\alpha: \;\;\left\{\alpha, \alpha+\dfrac{\pi}{\rho}\right\}\subset M;$
    \item $\exists \{\alpha,\beta,\gamma\}\subset M:\;
0<\beta-\alpha<\dfrac\pi\rho;\;\;0<\gamma-\beta<\dfrac\pi\rho;\;\;\gamma-\alpha>\dfrac\pi\rho.
$

\end{enumerate}

 }
Let us start with the case (i). Without loss of generality we suppose that $\alpha=-\dfrac\pi{2\rho}.$

Basic properties of  $\rho$-trigonometrically convex function  imply that if $\theta_0\in M$, then for every $t,$ such that $|t-\theta_0|\le\frac\pi\rho$, we have
\begin{equation}
   \widehat h_\Delta(t)\ge\cos\rho(t-\theta_0). 
\end{equation}
It follows that
$$\widehat h_\Delta(t)\ge \sin\rho|t|, \;\;\;|t|\le\frac{3\pi}{2\rho}.$$

 The next lemma follows immediately from standard properties of $\rho$-trigonometrically convex functions \cite[Chapter\ I, Sect.~16]{Levin}.
\begin{lemma}\label{int}
    If $h\in TC_\rho,$ then for each $x$ we have
$$\displaystyle\int_{x}^{x+2\pi/\rho}h\ge 0.$$
   
\end{lemma}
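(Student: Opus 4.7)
The plan is to reduce the integral inequality to a pointwise ``antipodal'' bound
\[
h(t)+h\bigl(t+\tfrac{\pi}{\rho}\bigr)\ge 0,\qquad t\in\mathbb{R},
\]
from which the lemma follows in one line by splitting $[x,x+2\pi/\rho]$ at its midpoint and shifting the second half:
\[
\int_x^{x+2\pi/\rho} h(t)\,dt=\int_x^{x+\pi/\rho}\Bigl(h(t)+h\bigl(t+\tfrac{\pi}{\rho}\bigr)\Bigr)\,dt\ge 0.
\]

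To establish the antipodal inequality I would start from the defining interpolation inequality of $\rho$-trigonometric convexity, written in its unnormalized form,
\[
h(\theta)\sin\rho(\theta_2-\theta_1)\le h(\theta_1)\sin\rho(\theta_2-\theta)+h(\theta_2)\sin\rho(\theta-\theta_1),
\]
valid whenever $\theta_1\le\theta\le\theta_2$ with $\theta_2-\theta_1<\pi/\rho$. Fix $t\in\mathbb{R}$ and a parameter $s\in(0,\pi/\rho)$, and set $\theta_1=t$, $\theta=t+s$, $\theta_2=t+\pi/\rho-\varepsilon$ with $\varepsilon>0$ small. As $\varepsilon\to 0^{+}$ the left-hand side tends to $h(t+s)\sin\pi=0$, while by continuity of $\rho$-trigonometrically convex functions (a standard fact from \cite[Ch.\ I, Sect.\ 16]{Levin}) the right-hand side tends to $\bigl(h(t)+h(t+\pi/\rho)\bigr)\sin\rho s$. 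Since $\sin\rho s>0$ on $(0,\pi/\rho)$, the antipodal inequality follows.

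The only nontrivial step is this limit passage $\theta_2-\theta_1\to\pi/\rho$, which relies only on continuity of $h$; everything else is a one-line splitting and change-of-variables argument. I therefore expect no real obstacle, in keeping with the paper's own remark that the lemma ``follows immediately from standard properties of $\rho$-trigonometrically convex functions.''
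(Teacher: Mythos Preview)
Your proposal is correct. The paper gives no proof of its own beyond the remark that the lemma ``follows immediately from standard properties'' in \cite[Ch.\ I, Sect.\ 16]{Levin}; your route via the antipodal bound $h(t)+h(t+\pi/\rho)\ge 0$ (a fact the paper itself invokes without proof in the argument for Theorem~\ref{T6}) followed by the one-line splitting of the integral is exactly the natural way to unpack that remark, and the limit passage you describe is sound.
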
 

Now, by Lemma \ref{int}, 
we get (recall that $\rho=N+\delta$)
\begin{gather*}
    \int_{-\pi}^{\pi}\widehat h_\Delta(t)\;{\rm d}t
=
\int_{-\frac{\delta\pi}{\rho}}^{2\pi\frac{N+\delta}{\rho}-\frac{\delta\pi}{\rho}}\widehat h_\Delta(t)\;{\rm d}t\ge
\int_{-\frac{\delta\pi}{\rho}}^{\frac{\delta\pi}{\rho}}\widehat h_\Delta(t)\;{\rm d}t
\\\ge
\int_{-\frac{\delta\pi}{\rho}}^{\frac{\delta\pi}{\rho}}\sin\rho|t|\;{\rm d}t=\frac2\rho(1-\cos\delta\pi).\end{gather*}
Recalling that $\rho-N=\delta\in[1/2,3/2],$  we obtain
\begin{equation}
    \label{type-i}
\int_{-\pi}^{\pi}\widehat h_\Delta(t)\;\mathrm{d}t
\ge\frac2\rho(1+|\cos\rho\pi|).\end{equation}

{
Proceeding with the case (ii), we introduce an auxiliary function
\begin{equation}
    \label{aux}
h_{\alpha,\beta,\gamma}(t):=
\begin{cases}
     \cos\rho(t-\alpha),&\alpha\le t\le\frac{\beta+\alpha}{2};\\
     \cos\rho(t-\beta),& \frac{\alpha+\beta}{2}\le t\le \frac{\beta+\gamma}{2};\\
\cos\rho(t-\gamma),&\frac{\beta+\gamma}{2}\le t\le\gamma;\\
\widehat h_\Delta,& \text{elsewhere}.
\end{cases}.
\end{equation}

Considering $\beta$ as a parameter, we will show that replacing $\beta$ by  $\beta^*=\alpha+\dfrac\pi\rho$ decreases the integral of the function $h_{\alpha,\beta,\gamma}$. 
This result will also be used in the next section, so we formulate it as a lemma.
\begin{lemma}
    \label{2-3}
   Given $ h_\Delta\in TC_\rho$ with $\displaystyle\max_{t\in[-\pi.\pi]} \widehat h_\Delta(t)=1$, let $\{\alpha,\beta,\gamma\}\subset M=\{t:\widehat h_\Lambda(t)=1\}$ be such that
$$\alpha<\gamma-\frac\pi\rho<\beta<\alpha+\frac\pi\rho<\gamma.$$ 
Put $\beta^*:=\alpha+\dfrac\pi\rho$. Then
    $$\int_{-\pi}^{\pi}h_{\alpha,\beta,\gamma}(t)\;{\rm d}t\ge \int_{-\pi}^{\pi}h_{\alpha,\beta^*,\gamma}(t)\;{\rm d}t.
    $$
\end{lemma}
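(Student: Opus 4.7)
The plan is to reduce the lemma to a one-variable trigonometric inequality via direct integration. First observe that, by the "elsewhere" clause in \eqref{aux}, both $h_{\alpha,\beta,\gamma}$ and $h_{\alpha,\beta^*,\gamma}$ coincide with $\widehat h_\Delta$ on $[-\pi,\pi]\setminus[\alpha,\gamma]$, so the claimed inequality reduces to showing $I(\beta)\ge I(\beta^*)$, where
$$I(\beta):=\int_\alpha^\gamma h_{\alpha,\beta,\gamma}(t)\,dt.$$

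The first computational step is to evaluate $I(\beta)$ by antidifferentiating each of the three cosine pieces on $[\alpha,(\alpha+\beta)/2]$, $[(\alpha+\beta)/2,(\beta+\gamma)/2]$, $[(\beta+\gamma)/2,\gamma]$. A routine calculation yields
$$I(\beta)=\frac{2}{\rho}\Bigl[\sin\tfrac{\rho(\beta-\alpha)}{2}+\sin\tfrac{\rho(\gamma-\beta)}{2}\Bigr],$$
and the sum-to-product identity $\sin A+\sin B=2\sin\tfrac{A+B}{2}\cos\tfrac{A-B}{2}$ then gives the closed form
$$I(\beta)=\frac{4}{\rho}\sin\tfrac{\rho(\gamma-\alpha)}{4}\cdot\cos\tfrac{\rho(2\beta-\alpha-\gamma)}{4}.$$

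The hypotheses $0<\beta-\alpha<\pi/\rho$ and $0<\gamma-\beta<\pi/\rho$ force $0<\gamma-\alpha<2\pi/\rho$, so the sine prefactor is a strictly positive constant, and the entire dependence on $\beta$ sits in the cosine factor. Setting $\phi(\beta):=\rho(2\beta-\alpha-\gamma)/4$ and $\phi^*:=\phi(\beta^*)$, a direct substitution gives $\phi^*=\pi/2-\rho(\gamma-\alpha)/4\in(0,\pi/2)$. Using the symmetry $\phi(\alpha+\gamma-\beta)=-\phi(\beta)$ together with $\gamma-\pi/\rho<\beta\le\alpha+\pi/\rho$, one checks that $\phi(\beta)$ ranges over $(-\phi^*,\phi^*]$, so $|\phi(\beta)|\le\phi^*<\pi/2$. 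Since $\cos$ is even and strictly decreasing on $[0,\pi/2]$, this forces $\cos\phi(\beta)\ge\cos\phi^*$, i.e.\ $I(\beta)\ge I(\beta^*)$, as desired.

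The only mildly delicate point is tracking that $\phi^*$ and the image of $\beta\mapsto\phi(\beta)$ all stay inside $[-\pi/2,\pi/2]$, so that the cosine monotonicity argument applies without sign trouble; this is exactly where the two inequalities $\beta-\alpha<\pi/\rho$ and $\gamma-\beta<\pi/\rho$ (hence $\gamma-\alpha<2\pi/\rho$) are used. Everything else is routine trigonometry.
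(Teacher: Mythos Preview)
Your proof is correct and follows essentially the same route as the paper's: both reduce to the interval $[\alpha,\gamma]$, compute the integral explicitly to obtain the closed form $\dfrac{4}{\rho}\sin\dfrac{\rho(\gamma-\alpha)}{4}\cos\dfrac{\rho(2\beta-\alpha-\gamma)}{4}$, and then use the constraints $\gamma-\pi/\rho<\beta<\alpha+\pi/\rho$ to show the cosine factor is minimized at $\beta=\beta^*$. Your presentation is slightly tidier in that you explicitly note the positivity of the sine prefactor and organize the cosine comparison via the auxiliary function $\phi$, but the underlying argument is identical.
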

\begin{proof}

Since 
$$h_{\alpha,\beta,\gamma}(t)=h_{\alpha,\beta^*,\gamma}(t)\;\;\forall t\in[-\pi,\pi]\setminus[\alpha,\gamma],$$
it is sufficient to consider an integral only on the interval $[\alpha,\gamma]$. We have
\begin{gather*}
\int_{\alpha}^{\gamma}h_{\alpha,\beta,\gamma}(t)\;{\rm d}t =\\=   \int_\alpha^{\frac{\beta+\alpha}{2}}\cos\rho(t-\alpha)\;{\rm d}t+
\int_{\frac{\beta+\alpha}{2}}^\frac{\gamma+\beta}{2}\cos\rho(t-\beta)\;{\rm d}t+
\int_{\frac{\gamma+\beta}{2}}^\gamma\cos\rho(t-\gamma)\;{\rm d}t\\
=
   2\int_{\frac{\beta+\alpha}{2}}^\frac{\gamma+\beta}{2}\cos\rho(t-\beta)\;{\rm d}t=
    \frac2\rho\left(\sin\rho\frac{\gamma-\beta}{2}-\sin\rho\frac{\alpha-\beta}{2}
\right)\\=
\frac4\rho\sin\rho\frac{\gamma-\alpha}{4}\cos\rho\frac{\alpha+\gamma-2\beta}{4}.\end{gather*}
Recalling the inequality
$$\alpha<\gamma-\frac\pi\rho<\beta<\alpha+\frac\pi\rho<\gamma,$$
we see that 
$$-\dfrac{\pi}\rho<\gamma-\alpha-\dfrac{2\pi}\rho<\alpha+\gamma-2\beta < \alpha-\gamma+\dfrac{2\pi}\rho<\dfrac{\pi}\rho.$$  Hence
$$\rho\dfrac{\alpha+\gamma-2\beta}{4}\in\left(-\dfrac\pi{2},\dfrac\pi{2}\right).$$ 
{ Therefore, by the elementary properties of the cosine function, we have 
$$\cos\rho\frac{\alpha+\gamma-2\beta}{4}>\cos\rho\frac{\alpha-\gamma+{2\pi}/{\rho}}{4}=\cos\rho\frac{\alpha+\gamma-2\beta^*}{4},$$
and finally}
$$\int_{-\pi}^{\pi}h_{\alpha,\beta,\gamma}(t)\;{\rm d}t\ge \int_{-\pi}^{\pi}h_{\alpha,\beta^*,\gamma}(t)\;{\rm d}t.$$
\end{proof}
Now, since the function $h_{\alpha,\beta^*,\gamma}$ is of type (i), applying \eqref{type-i} we get
$$\int_{-\pi}^{\pi}\widehat h_{\Delta}(t)\;{\rm d}t\ge\int_{-\pi}^{\pi}h_{\alpha,\beta,\gamma}(t)\;{\rm d}t\ge \int_{-\pi}^{\pi}h_{\alpha,\beta^*,\gamma}(t)\;{\rm d}t\ge\frac2\rho(1+|\cos\rho\pi|),$$
and therefore
$$\mathcal D\ge \frac{\rho}{2\pi}\int_{-\pi}^{\pi}\widehat h_{\Delta}(t){\rm d}t\ge \frac{1+|\cos\pi\rho|}{\pi}.$$
}

   To  show that this estimate is sharp, we introduce the following $\rho$-trigonometrically convex function:
$$k(t)=\begin{cases}
    \sin\rho |t|,&  |t|\le\frac{\delta\pi}\rho,\\
    \sin\rho t,&\frac{\delta\pi}\rho\le t\le2\pi-\frac{\delta\pi}\rho.\end{cases}
    $$

Since $k$ is piecewise $\rho$-trigonometric, to ensure that $k\in TC_\rho,$ it is sufficient to show that at the points $t=0, \pm\dfrac{\delta\pi}{\rho}$ we have 
$k'_+(t)\ge k'_-(t)$. Indeed, this inequality obviously holds at the points $0$ and $\dfrac{\delta\pi}\rho$, and at the point $-\dfrac{\delta\pi}\rho$ we have 
$$ {k}'_+\left(-\frac{\delta\pi}\rho\right)=-\rho\cos\delta\pi\ge0,$$
and
$$
{k}'_-\left(-\frac{\delta\pi}\rho\right)={k}'_-(2\pi-\frac{\delta\pi}\rho)=\rho\cos(2\pi\rho-\delta\pi)=
\rho\cos(\delta\pi)\le 0.$$
Hence, $k\in TC_\rho.$ Moreover, it is locally $\rho$-balanced and
\begin{gather*}
     \int_{-\pi}^{\pi}k(t){\rm d}t=
    \left( \int_{-\frac{\delta\pi}{\rho}}^{\frac{\delta\pi}{\rho}}+\int_{\frac{\delta\pi}{\rho}}^{\frac{\delta\pi}{\rho}+N\frac{2\pi}{\rho}}\right)k(t){\rm d}t
     \\
     =
     \int_{-\frac{\delta\pi}{\rho}}^{\frac{\delta\pi}{\rho}}k(t){\rm d}t
     =
     2\int_0^{\frac{\delta\pi}{\rho}} \sin\rho t\;{\rm d}t=\frac{2}{\rho}(1-\cos\delta\pi)=
     \frac{2}{\rho}\left(1+|\cos\rho\pi|\right).
          \end{gather*}

      { So, the estimate is achieved for $h_*=k$, and the corresponding angular density is
        $$\Delta_*=\frac1\pi\left(\delta_0+|\cos\rho\pi|\cdot\delta_{-\frac{\delta\pi}{\rho}}\right).$$
}

\end{itemize}

   Thus, in each case we have constructed two $\rho$-trigonometrically convex functions
$h_*$ and $h^*$, that correspond to the angular densities $\Delta_*$ and $\Delta^*$, such that  for any $\Lambda\in \mathcal A_\rho$ with $\Delta_\Lambda=\Delta_*$ the density map $\mathcal D$ attains its minimum value, and for any $\Lambda\in \mathcal A_\rho$  with $\Delta_\Lambda=\Delta^*$, the map $\mathcal D$ attains its  maximal value.

Now, for every $d\in[-1,1]$ put
\[
h_d:=\max\{h_*,d\}.
\]
Then $h_d$ is $\rho$-trigonometrically convex, $h_{-1}=h_*,$ $h_1=1=h^*$. The corresponding angular density we denote by $\Delta_d$. 
Then for any $\Lambda_d\in \mathcal A_\rho$ with $\Delta_{\Lambda_d}=\Delta_d$ we have
$$w(d):=\mathcal D(\Lambda_d)=\frac{\rho}{2\pi}\int_{-\pi}^\pi \widehat h_d(t) {\rm d}t=\frac{\rho}{2\pi}\int_{-\pi}^\pi \max\{\widehat h_*(t),d\} {\rm d}t.$$
The function $w$ depends continuously on $d$, hence, $\mathcal D(\mathcal A_\rho)$ fills the whole interval.

$\hfill\Box$


\section{Proof of Theorem \ref{ALS1}}

The proof of this theorem uses an idea from the proof of \cite[Theorem 2]{ALS}.

\begin{lemma}
    \label{U}
     If $\Lambda\in\mathcal B_\rho$ then $$\sigma_U(\Lambda)=\sigma_Z(\Lambda)=1.$$
\end{lemma}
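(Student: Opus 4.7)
The plan is to use both defining conditions of $\mathcal{B}_\rho$ in complementary ways: condition (1) directly forces $\sigma_U(\Lambda) \le 1$ and (via a factorization argument) $\sigma_Z(\Lambda) \le 1$, while condition (2) forces $\sigma_U(\Lambda) \ge 1$ through a small-perturbation trick. The general inequality $\sigma_U \le \sigma_Z$ then closes the loop and pins both quantities to $1$.

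For $\sigma_U(\Lambda) \ge 1$, I would argue by contradiction. Suppose $\sigma_U(\Lambda) < 1$ and pick a non-zero $f \in \mathcal{E}_{\rho,\sigma'}$ with $\sigma'<1$ and $f|_\Lambda = 0$. Fix any $\rho$-regular reference set $\Lambda_0$ with non-zero angular density and consider its dilate $t\Lambda_0$; this remains $\rho$-regular with non-zero angular density scaling as $t^{-\rho}\Delta_{\Lambda_0}$, and one checks directly that $\sigma_Z(t\Lambda_0) = t^{-\rho}\sigma_Z(\Lambda_0)$. By Theorem \ref{T1} (and Corollary \ref{Cor1} for integer $\rho$) there is an entire $\phi$ of type $\sigma_Z(t\Lambda_0)$ with $\phi^{-1}\{0\} = t\Lambda_0$. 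Taking $t$ so large that $\sigma' + \sigma_Z(t\Lambda_0) < 1$, the product $f\phi$ is a non-zero element of $\mathcal{E}_{\rho,1}$ vanishing on $\Lambda \cup t\Lambda_0$, which contradicts (2).

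For $\sigma_Z(\Lambda) \le 1$, I would take a witness $f \in \mathcal{E}_{\rho,1}$ supplied by (1) and factor $f = g \cdot W_\Lambda$; the quotient $g$ is entire since $f$ vanishes on $\Lambda$ with multiplicity. Because $W_\Lambda$ has completely regular growth, Levin's theorem on the indicator of a product \cite[Ch.~III, \S4, Thm~5]{Levin} gives $h_f = h_g + h_{W_\Lambda}$. The indicator $h_g$ lies in $TC_\rho$ and corresponds via $h_g'' + \rho^2 h_g = 2\pi\rho\,\Delta^*$ to a non-negative measure $\Delta^*$; an integration-by-parts argument exploiting the $2\pi$-periodicity of $h_g$ shows that, when $\rho \in \mathbb{N}$, $\Delta^*$ automatically satisfies the Lindel\"of moment condition \eqref{moment}. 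If $\Delta^* = 0$, then $h_g \equiv 0$ (non-integer $\rho$) or $h_g \in T_\rho$ (integer $\rho$), and Theorem \ref{T1} gives $\sigma_Z(\Lambda) \le \max h_f \le 1$. If $\Delta^* \ne 0$, Levin's realization construction \cite[Ch.~II, \S4]{Levin} supplies a $\rho$-regular set $\Lambda_1$ with $\Delta_{\Lambda_1} = \Delta^*$, so that $h_{\Lambda_1} - h_g \in T_\rho$; Theorem \ref{T1} combined with Corollary \ref{Cor1} then produces $F \in \mathcal{E}_{\rho,1}$ with $F^{-1}\{0\} = \Lambda \cup \Lambda_1$, contradicting (2) because $\Lambda_1$ has non-zero angular density. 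Hence $\Delta^* = 0$ is forced, and $\sigma_Z(\Lambda) \le 1$.

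The main technical obstacle is the second subcase in the argument for $\sigma_Z \le 1$: one has to check that the measure $\Delta^*$ attached to the indicator of the multiplier $g$ is genuinely realizable as the angular density of a $\rho$-regular set (requiring the Lindel\"of moment condition, which is secured by the periodicity of $h_g$ via integration by parts, together with Levin's existence theorem), and that the resulting combined set admits an entire witness of type at most $1$ with zero set exactly $\Lambda \cup \Lambda_1$, which is the ``infimum-attained'' content of Corollary \ref{Cor1}. Once these are in place, the three inequalities assemble to give $\sigma_U(\Lambda) = \sigma_Z(\Lambda) = 1$.
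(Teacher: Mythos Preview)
Your proof is correct and follows essentially the same strategy as the paper's: both establish $\sigma_U\ge 1$ by showing that any slack would let one adjoin a $\rho$-regular set of small (non-zero) density while keeping $\Lambda\cup\Lambda_1$ a non-uniqueness set, and both establish $\sigma_Z\le 1$ by showing that the multiplier arising from the witness in condition~(1) must have vanishing associated density, else condition~(2) fails. The paper phrases everything in the $TC_\rho$ formalism via Theorem~\ref{T3} (taking $k^*\in TC_\rho$ with $\widehat h_\Delta+k^*\le 1$, adding a constant $\varepsilon$ for the first step, and arguing $k^*\in T_\rho$ for the second), whereas you work concretely with entire functions (dilating a reference set, factoring $f=gW_\Lambda$, and realizing $\Delta^*$ via Levin's construction); your version is more explicit about the realization step and the Lindel\"of moment check, which the paper leaves implicit.
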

\begin{proof}

Put   $A:=\sigma_U(\Lambda)\le\sigma_Z(\Lambda)=:B$, $\Delta:=\Delta_\Lambda.$
 
Recall that $\displaystyle B=\max_{t\in[-\pi,\pi]}\widehat h_\Delta(t)$, where   $\widehat h_\Delta$ is  the  $\rho$-balanced modification of $h_\Delta$ defined by \eqref{lb-m}.

The first condition of the definition of the set $\mathcal B_\rho$  implies that 
$A \le 1$.
In other words, there exists $k^*\in TC_\rho$, such that 
$$\widehat h_\Delta(t)+k^*(t)\le A\le1,\;\;\; \forall t\in[-\pi,\pi].$$  

Suppose that $A<1$. { Then for $\eps$ small enough, we have
$$\widehat h_\Delta(t)+ k^*(t)+\eps<1,\;\;\; \forall t\in[-\pi,\pi],$$  }{
that contradicts the second condition of the definition of the set $\mathcal B_\rho$ .
Therefore, $A= 1.$

Let us assume now that $k^*\in TC_\rho\setminus T_\rho$. Note that, in this case the corresponding density $\Delta^*$ is non-zero. Then, it follows from the second condition of the definition of the set $\mathcal B_\rho$,  that
$$\max_{t\in[-\pi,\pi]}(k^*(t)+\widehat h_\Delta(t))>1,$$
and we get a contradiction. 

Now, let $k^*\in T_\rho$. Since the function $\widehat h_\Delta$  is  $\rho$-balanced,  we obtain that
$$1=A\le B\le \max_{t\in[-\pi,\pi]}(k^*(t)+\widehat h_\Delta(t))\le 1.$$
}

The lemma is proved.

\end{proof}


Fix $\Lambda\in \mathcal B_\rho$ and put $\Delta:=\Delta_\Lambda$. From the conditions of the theorem, by Lemma \ref{U} and Theorem \ref{T5}, it follows that 
 $\widehat h_\Delta\in TC_\rho$ is a locally $\rho$-balanced function with $\displaystyle\max_{t\in[-\pi,\pi] }\widehat h_\Delta(t)=1.$ 
The upper bound follows immediately from \eqref{Dens}:
$$\mathcal D(\Lambda)\le \rho=\frac{\rho}{2\pi}\int_{-\pi}^{\pi}\;{\rm d}t.$$
The estimate is achieved for any $\Lambda$ with uniform angular density $\Delta^*$:
$$\Delta^*([\alpha,\beta])=\dfrac{\rho}{2\pi}\cdot(\beta-\alpha),\;\;\; 0\le \alpha<\beta\le 2\pi.$$
The corresponding $\rho$-trigonometrically convex function is
$h^*(t)=1.$

    Turning to the proof of the lower bound, let us start with the following lemma.
    

    \begin{lemma}\label{M-L}
      Let $\Lambda\in \mathcal B_\rho$ and $\Delta_\Lambda=\Delta$.   Then the set $$M=\{\theta:\widehat h_\Delta(\theta)=1\}$$ 
has no gap of  length greater than ${\pi}/{\rho}.$
\end{lemma}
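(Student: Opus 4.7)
My proof plan: I would argue by contradiction. Suppose $M$ admits a gap $(a,b)$ (identified modulo $2\pi$) with length $L := b-a > \pi/\rho$. I will exhibit a $\rho$-regular set $\Lambda_1$ with non-zero angular density such that $\Lambda \cup \Lambda_1$ is still a non-uniqueness set for $\mathcal E_{\rho,1}$, contradicting condition (2) of the definition of $\mathcal B_\rho$.

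The construction of $\Lambda_1$ is as follows. Let $\theta_0 := (a+b)/2$ and $I_0 := [\theta_0 - \pi/(2\rho),\, \theta_0 + \pi/(2\rho)]$. Since $L > \pi/\rho$, the closed interval $I_0$ lies strictly inside the open gap $(a,b)$. Define the ``bump''
$$
g(\theta)=\begin{cases}\cos\rho(\theta-\theta_0), & \theta\in I_0,\\ 0, & \theta\notin I_0,\end{cases}
$$
extended with period $2\pi$. A direct check shows $g$ is continuous and its first derivative has positive jumps of size $\rho$ at $\theta_0\pm \pi/(2\rho)$, so $g''+\rho^2 g\ge 0$ in the distributional sense; hence $g\in TC_\rho$, with associated measure $\Delta_g=\tfrac{1}{2\pi}\bigl(\delta_{\theta_0-\pi/(2\rho)}+\delta_{\theta_0+\pi/(2\rho)}\bigr)$. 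For integer $\rho$ the moment condition $\int_0^{2\pi} e^{i\rho\theta}\,\mathrm d\Delta_g=\tfrac{1}{2\pi}e^{i\rho\theta_0}(e^{-i\pi/2}+e^{i\pi/2})=0$ holds automatically, so for each $\varepsilon>0$ there exists a $\rho$-regular set $\Lambda_1$ with $\Delta_{\Lambda_1}=\varepsilon\Delta_g$ and density $\varepsilon/\pi>0$.

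Since $\widehat h_\Delta<1$ on the compact set $I_0\subset(a,b)$, continuity gives $\widehat h_\Delta\le 1-\delta$ on $I_0$ for some $\delta>0$. Choosing $\varepsilon\in(0,\delta]$ yields $\widehat h_\Delta+\varepsilon g\le 1$ everywhere (on $I_0$ by the choice of $\delta$, and equal to $\widehat h_\Delta\le 1$ off $I_0$). Because $\widehat h_\Delta+\varepsilon g$ differs from $h_{\Delta+\varepsilon\Delta_g}$ by an element of $T_\rho$ (by linearity of $h\mapsto h''+\rho^2 h$), Theorem \ref{T1} gives
$$
\sigma_Z(\Lambda\cup\Lambda_1)\le \max_t\bigl(\widehat h_\Delta(t)+\varepsilon g(t)\bigr)\le 1.
$$
By Corollary \ref{Cor1} the infimum is attained, so there exists $f\in\mathcal E_{\rho,1}\setminus\{0\}$ vanishing exactly on $\Lambda\cup\Lambda_1$; hence $\Lambda\cup\Lambda_1$ is a non-uniqueness set for $\mathcal E_{\rho,1}$, contradicting condition (2) of $\mathcal B_\rho$. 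The main technical hurdle is building a perturbation $\Delta_g\ge 0$ whose $h$-function, when added to $\widehat h_\Delta$, does not push the maximum above $1$ on the existing set $M$; the hypothesis $L>\pi/\rho$ is used precisely to fit a cosine bump of width $\pi/\rho$ strictly inside the gap, which automatically produces two symmetric Dirac masses at $\theta_0\pm\pi/(2\rho)$ whose symmetric placement makes the moment condition for integer $\rho$ trivially satisfied.
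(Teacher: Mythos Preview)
Your proof is correct and follows essentially the same approach as the paper: both assume a gap of length exceeding $\pi/\rho$, place a cosine bump $g(\theta)=\cos\rho(\theta-\theta_0)$ of width exactly $\pi/\rho$ centered in the gap, scale it so that $\widehat h_\Delta+\varepsilon g\le 1$, and derive a contradiction with condition~(2) of $\mathcal B_\rho$. One small difference: the paper, after building $\Lambda'=\Lambda\cup\Lambda_1$, observes that $M'\supset M$ is still locally $\rho$-balanced and invokes Theorem~\ref{T5} to conclude $\sigma_U(\Lambda')=\sigma_Z(\Lambda')=1$; you instead argue directly that $\sigma_Z(\Lambda')\le 1$ already yields a nonzero $f\in\mathcal E_{\rho,1}$ vanishing on $\Lambda'$, which is marginally more economical. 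A minor citation point: Corollary~\ref{Cor1} as stated covers only integer $\rho$; for non-integer $\rho$ the attainment of the infimum follows directly from Theorem~\ref{T1}(i) (the canonical product $W_{\Lambda'}$ has indicator $h_{\Delta'}$).
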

\begin{proof}
First of all, from Lemma \ref{U} it follows that the set $M$ is non-empty. Hence, for $\rho\le 1/2$ the  lemma is trivial.

Suppose that $\rho>1/2$.
We use the fact that  the set $M$ is locally $\rho$-balanced.
Assume that $J:=\left[-\dfrac\pi{2\rho},\dfrac\pi{2\rho}\right]\subset \mathbb R\setminus M,$ and put
 $$\displaystyle m:=\max_{t\in J}\widehat h(t)<1.$$

Let $\Lambda_1$ be a $\rho$-regular set with angular density 
$\Delta_1$ that  corresponds to the $\rho$-trigonometrically convex function 
 $$h_{\Delta_1}(t)=\begin{cases}
    {(1-m)}\cos\rho t, &|t|\le\frac{\pi}{2\rho};\\
    0,&\frac{\pi}{2\rho}\le|t|\le\pi.
\end{cases}$$

Consider the  set $\Lambda'=\Lambda\cup \Lambda_1$. It is a $\rho$-regular set with  density 
$$\Delta'=\Delta+\Delta_1,$$
that corresponds to the $\rho$-trigonometrically convex function  $$h_{\Delta'}(t)=\widehat h_\Delta(t)+h_{\Delta_1}(t)\le 1.$$
Moreover, we have 
$$M'=\{\theta:h_{\Delta'}
(t)=1\}\supset M,$$ and hence $M'$ is locally $\rho$-balanced. Therefore, by Theorem~\ref{T5} and Corollary~\ref{Cor1}, 
 $\sigma_U(\Lambda')=\sigma_Z(\Lambda')=1.$ 
So, we have found a $\rho$-regular set with positive density, such that the set $\Lambda\cup\Lambda_1$ is a zero set for $\mathcal E_{\rho,1}$. This stands in contradiction to the second condition of the definition of $\mathcal B_\rho$, concluding the proof of the lemma.
\end{proof}


Returning to the proof of the theorem, let us start with the case $\rho\le1/2.$ 
\begin{itemize}[leftmargin=*]
    \item{$\rho\in (0,1/2].$} 

Without loss of generality, we assume that $0\in M.$
Then, applying inequality  \eqref{en},
 we get 
$$\mathcal D(\Lambda)\ge\frac\rho{2\pi}\int_{-\pi}^\pi \widehat h_\Delta\;{\rm d}t\ge\frac\rho{2\pi}\int_{-\pi}^\pi\cos\rho t \;{\rm d}t=\frac{1}{\pi}\sin\pi\rho, $$
and the minimum of the density is attained for $h_*(t)=\cos\rho t,$ which corresponds to the angular density $\Delta_*=\dfrac{\sin \pi\rho}{\pi}\delta_
\pi.$

{

 \item{$\rho>1/2.$} 

Without loss of generality we can assume that $0\in M$. From Lemma \ref{M-L} it follows that there exists a finite set $\Gamma:=\{\gamma_k\}_{k=0}^N\in M,$ where $$0=\gamma_0\le\gamma_1\le\ldots\le \gamma_N=2\pi,$$
such that
$0\le\gamma_k-\gamma_{k-1}\le {\pi}/{\rho}$.
Let us define a $\rho$-trigonometrically convex function $H$ as follows:   for $k=0,\dots, N-1$ we put
$$H_\Gamma(t)=\max(\cos\rho(t-\gamma_k),\cos\rho(t-\gamma_{k+1})),\;\;\;t:\;\gamma_{k}\le t\le \gamma_{k+1}.$$
Then, by \eqref{en}, we have $\widehat h_\Delta(t) \ge H_\Gamma(t).$

Applying Lemma \ref{2-3}, we observe that increasing the distance between neighboring points $\gamma_k$ until it equals  $\pi/\rho$ will decrease the integral  of the function $H_\Gamma$. 
Hence, the optimal configuration that minimizes the integral is achieved for $$\Gamma^*:=\{\gamma^*_k\}_{k=0}^{N^*}\subset M,$$ where $ N^*=[2\rho]$ and
$$\gamma_0^*=0,\gamma_1^*=\frac{\{2\rho\}\pi}\rho, \gamma_{k+1}^*=\gamma_k^*+\frac{\pi}{\rho},\;\;k=1,\ldots N^*-1.$$
It follows that
\begin{gather*}
    \int_{-\pi}^\pi H_\Gamma(t)\;{\rm d}t\ge \int_{-\pi}^\pi H_{\Gamma^*}(t)\;{\rm d}t\\=2\int_0^{\gamma_1^*}
    \cos \rho t \;{\rm d}t+2N^*\int_0^{\frac{\pi}{2\rho}}
    \cos \rho t \;{\rm d}t
    =\frac2\rho\left(\sin \frac{\{2\rho\}\pi}{2}+[2\rho]\right).
\end{gather*}
Finally, we get
$$\mathcal D(\Lambda)\ge\frac\rho{2\pi}\int_{-\pi}^\pi \widehat h_\Delta\;{\rm d}t\ge\frac\rho{2\pi}\int_{-\pi}^\pi H_\Gamma(t) \;{\rm d}t\ge 
\frac1\pi\left(\sin \frac{\{2\rho\}\pi}{2}+[2\rho]\right).$$

The measure, where the minimal value of the density is achieved, is
$$\Delta_*=\frac1\pi\left(\sin \frac{\{2\rho\}\pi}{2}\delta_{\frac{\{2\rho\}\pi}{2\rho}}+\sum_{k=1}^{[2\rho]-1}\delta_{\gamma^*_1+\frac{ (2k+1)\pi}{2\rho}}\right).$$

}\end{itemize}

The remainder of the proof is analogous to the final part of the proof of Theorem 1.7.
  For every $\rho>0$ we have constructed two angular densities $\Delta_*$ and $\Delta^*$, such that for any  $\Lambda\in \mathcal B_\rho$ with $\Delta_\Lambda=\Delta_*$ the density map $\mathcal D$ attains its minimum value on $\mathcal B_\rho$, and for any $\Lambda\in \mathcal B_\rho$  with $\Delta_\Lambda=\Delta^*$, the map $\mathcal D$ attains its  maximal value. The corresponding $\rho$-trigonometrically convex functions we denote by $h_*$ and $h^*$.
  
Put
\[
h_d:=\max\{h_*,d\}.
\]
Then $h_d$ is $\rho$-trigonometrically convex, $h_{-1}=h_*,$ $h_1=1=h^*$. The corresponding angular density we denote by $\Delta_d$. 
Then for any $\Lambda_d\in \mathcal B_\rho$ with $\Delta_{\Lambda_d}=\Delta_d$ we have
$$w(d):=\mathcal D(\Lambda_d)=\frac{\rho}{2\pi}\int_{-\pi}^\pi \widehat h_d(t) {\rm d}t=\frac{\rho}{2\pi}\int_{-\pi}^\pi \max\{\widehat h_*(t),d\} {\rm d}t.$$
The function $w$ depends continuously on $d$, hence, $\mathcal D(\mathcal B_\rho)$ is the interval $[\mathcal D(\Lambda_*),\mathcal D(\Lambda^*)]$.

}

\section{Application to random zero sets in Fock-type spaces}\label{Fock}

An entire function $f$  belongs to the {\bf Fock-type  space} $\mathcal F_\rho$, $\rho>0$,  if
\[\|f\|^2_{\rho}:=
\int_{\mathbb C}\left(\left|f(z)\right|
e^{-{|z|^\rho}}\right)^2 {\rm d}m(z)<\infty.
\]
Note that
$$\bigcup_{\sigma<1} \mathcal E_{\rho,\sigma}\subset \mathcal F_\rho\subset \mathcal E_{\rho,1}.$$

 The question we are concerned with in this section
is whether or not a kind of randomization of a given sequence of points is almost surely a zero set (or uniqueness set) of the Fock-type space $\mathcal F_\rho$.  

Let us fix a non-decreasing positive sequence $$\Lambda_{\mathbb R}=\{l_k>0\}_{k\in\mathbb N},$$ and suppose  that it has a finite density
$$\displaystyle \mathcal D=\lim_{R\to\infty}\frac{n_{\Lambda_{\mathbb R}}(R)}{R^\rho}<\infty.$$ 

To determine the type of randomization of $\Lambda_{\mathbb R}$, we fix a   probabilistic measure $\Delta$ on $[0,2\pi),$ and for $\rho\in\mathbb N$ we require in addition that it has zero $\rho$-th  moment
\begin{equation}\label{moment2}
    \int_0^{2\pi} e^{i\rho t}d\Delta(t)=0.
\end{equation}

{\it
       We define a {\bf$\Delta$-randomization} of $\Lambda_{\mathbb R}$ to be
    a random set $$\widetilde\Lambda_\Delta:=\{\lambda_k=l_ke^{i\theta_k}, l_k\in\Lambda_{\mathbb R}\},$$ where $\theta_k$ are independent random variables equally distributed with distribution~$\Delta$.  
}

The case of uniform randomization was considered earlier in  \cite{FT, AK}.

Note, that  each of the events 
"$\widetilde\Lambda_\Delta$
is a zero set of $\mathcal F_\rho$", 
"$\widetilde\Lambda_\Delta$
is a uniqueness set of $\mathcal F_\rho$" 
are tail events, and so by the Kolmogorov zero–one law each of them either almost surely happens or almost surely does not happen, for a fixed  sequence $\Lambda_{\mathbb R}$.

Let us show that the random set $\widetilde\Lambda_\Delta$ almost surely satisfies the Lindel\"of condition \eqref{Lind}.
Indeed, first of all, due to  condition~(\ref{moment2}),
$\mathbb E(\lambda_k^{-\rho})=0$ for each $k.$ Furthermore, $$\displaystyle\sum_{\lambda_k\in \widetilde\Lambda_\Delta}{\rm Var}(\lambda_k^{-\rho})=\sum_{l_k\in\Lambda_{\mathbb R}} {l_k^{-2\rho}}<\infty,$$ hence,
 by the Khinchine-Kolmogorov theorem  \cite[Theorem 22.6]{Bill}, (also known as Kolmogorov's two-series theorem), the series $\sum{\lambda_k^{-\rho}}$ converges almost surely. Thus, for the random set $\widetilde\Lambda_\Delta$ the Lindel\"of condition \eqref{Lind} holds almost surely.

 Our next step is to show that the set $\widetilde\Lambda_\Delta$  almost surely has angular density.

\begin{lemma}   A random sequence $\widetilde\Lambda_\Delta$  almost surely has the angular density~$\mathcal D\cdot\Delta$.
\end{lemma}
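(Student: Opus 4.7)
The plan is to reduce the statement to the classical strong law of large numbers applied sector by sector, and then promote the ``for fixed $\alpha,\beta$'' conclusion to the required ``almost surely for all but countably many $\alpha,\beta$'' formulation by exhausting with a countable dense set.

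First, fix a pair $0\le\alpha<\beta\le 2\pi$. Writing $N_R:=n_{\Lambda_{\mathbb R}}(R)$ and $X_k:=\mathbf 1_{(\alpha,\beta)}(\theta_k)$, the counting function for the randomized set factors as
\[
n_{\widetilde\Lambda_\Delta}(R;\alpha,\beta)=\sum_{k\,:\,l_k<R}X_k.
\]
The $X_k$ are i.i.d.\ Bernoulli random variables with parameter $p:=\Delta((\alpha,\beta))$, and since $\mathcal D>0$ or $N_R$ is bounded (in which case the statement is trivial), we may assume $N_R\to\infty$. The strong law of large numbers then gives $N_R^{-1}\sum_{k=1}^{N_R}X_k\to p$ almost surely, and combining this with the deterministic asymptotic $N_R/R^\rho\to\mathcal D$ yields
\[
\lim_{R\to\infty}\frac{n_{\widetilde\Lambda_\Delta}(R;\alpha,\beta)}{R^\rho}=\mathcal D\cdot\Delta((\alpha,\beta))\qquad\text{a.s.}
\]

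Second, to upgrade to the ``almost surely for all but countably many $(\alpha,\beta)$'' statement, I would let $A\subset[0,2\pi]$ denote the (at most countable) set of atoms of $\Delta$ and fix a countable dense set $\mathcal C\subset[0,2\pi]\setminus A$. For each pair $(\alpha,\beta)\in\mathcal C\times\mathcal C$ with $\alpha<\beta$, the previous step gives an event of full measure on which convergence holds; intersecting over the countable collection of such pairs yields a single event $\Omega_0$ of probability one on which convergence holds simultaneously for every pair from $\mathcal C$.

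Third, on $\Omega_0$ extend the conclusion to arbitrary $\alpha,\beta\notin A$ by a sandwich argument: choose $\alpha_n^\pm,\beta_n^\pm\in\mathcal C$ with $\alpha_n^+\downarrow\alpha$, $\alpha_n^-\uparrow\alpha$, $\beta_n^+\downarrow\beta$, $\beta_n^-\uparrow\beta$. By monotonicity of the counting function,
\[
\frac{n_{\widetilde\Lambda_\Delta}(R;\alpha_n^+,\beta_n^-)}{R^\rho}\le\frac{n_{\widetilde\Lambda_\Delta}(R;\alpha,\beta)}{R^\rho}\le\frac{n_{\widetilde\Lambda_\Delta}(R;\alpha_n^-,\beta_n^+)}{R^\rho},
\]
and letting $R\to\infty$ (using the convergence on $\mathcal C$) and then $n\to\infty$ (using that $\alpha,\beta$ are not atoms of $\Delta$, so $\Delta((\alpha_n^\pm,\beta_n^\mp))\to\Delta((\alpha,\beta))$) gives the limit $\mathcal D\cdot\Delta((\alpha,\beta))$. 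Since the exceptional set $A$ is countable, the angular density of $\widetilde\Lambda_\Delta$ exists and equals $\mathcal D\cdot\Delta$ almost surely.

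The main subtlety I expect is the bookkeeping at the endpoints: one must take the countable dense set inside the complement of the atoms of $\Delta$ so that the sandwich argument genuinely produces the value $\mathcal D\cdot\Delta((\alpha,\beta))$ in the limit. Everything else reduces to an application of the strong law, and the Lindel\"of condition needed for $\rho$-regularity has already been verified in the preceding paragraph.
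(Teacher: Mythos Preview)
Your argument is correct. The paper takes a different, more packaged route: it invokes Varadarajan's theorem (a generalization of Glivenko--Cantelli) to conclude that the empirical measures $\mu_n^\omega=\frac1n\sum_{k=1}^n\delta_{\theta_k}$ converge weakly to $\Delta$ almost surely, and then reads off the convergence $\mu_n^\omega((\alpha,\beta))\to\Delta((\alpha,\beta))$ simultaneously for all continuity points $\alpha,\beta$ of $\Delta$ from the definition of weak convergence; combining with $n_{\Lambda_{\mathbb R}}(R)/R^\rho\to\mathcal D$ finishes. Your approach unpacks exactly what is inside the proof of such a theorem: the strong law for a single arc, plus the countable-dense-set and sandwich argument to handle all non-atomic endpoints uniformly. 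The paper's route is shorter to write but relies on a reference the reader may not know; your route is more elementary (only the strong law of large numbers) and self-contained, at the cost of having to carry out the endpoint bookkeeping by hand. Both are standard and equally valid here.
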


\begin{proof}
{ Even though this lemma may seem  obvious to specialists, we nevertheless prefer to prove it, relying on the following generalization of the Glivenko-Cantelli theorem, which is due to Varadarajan.}

\begin{ThV}\cite{Var}, \cite[Theorem 11.4.1]{Dudley}.
{
Given a separable metric space $(S,d)$ with a Borel $\sigma$-algebra $ \Sigma\subset 2^S$ and a probability measure $\mu: \Sigma\to [0,1]$, define the probability space $(\Omega, \mathbb P),$ where $\Omega :=S^\mathbb N $ and  $\mathbb P$ is the  corresponding  infinite product probability (uniquely defined,  \cite[Theorem 8.2.2]{Dudley}).
 For $\omega=(\xi_1^{\omega},\xi_2^{\omega},\cdots)\in\Omega$ define empirical distribution $\mu_n^{\omega}$ on $\sigma$-algebra $\Sigma$ by $$\mu_n^{\omega}:=\frac1n\left(\delta_{\xi^{\omega}_1}+\delta_{\xi^{\omega}_2}+\dots+\delta_{\xi^{\omega}_n}\right).$$
Then $\mu_n^{\omega
}$ weakly converges to $\mu$ almost surely:
$$\mathbb P(\{\omega\in S^{\mathbb N}: \mu_n^{\omega
}\xrightarrow{weak}\mu\})=1.$$
}
\end{ThV}

The weak convergence of empirical distributions ${\mu_n^{\omega}}$  to $\mu$ implies that for all arcs $(\alpha,\beta)$ such that $\mu(\{\alpha\})=\mu(\{\beta\})=0,$ we have 
$$\lim_{n\to\infty}{\mu_n^{\omega}}(\alpha,\beta)=\mu(\alpha,\beta),$$
where $\mu$ is some probability measure on a metric space $S$, and $\mu_n^{\omega}$ are corresponding empirical measures.

Now, if we take $S=[0,2\pi)$, $\mu=\Delta,$ and $\xi_k=\theta_k,$ then for all $\alpha,\beta\in[0,2\pi)$, and for any $R\in (\lambda_n,\lambda_{n+1}],$$$\mu_n^{\omega}(\alpha,\beta)=\frac{n_\Lambda(R;\alpha,\beta)}{n}=\frac{n_\Lambda(R;\alpha,\beta)}{n_{\Lambda_{\mathbb R}}(R)}.$$
Hence, {with probability one, for all but a countable set of angles}

$$\lim_{R\to\infty}\frac{n_\Lambda(R;\alpha,\beta)}{R^\rho}=\lim_{R\to\infty}\frac{\mu_{n(R)}^{\omega}(\alpha,\beta) {n_{\Lambda_{\mathbb R}}(R)}}{R^\rho}=\mathcal D\cdot\Delta(\alpha,\beta).
$$

\end{proof}

It follows that a random sequence $\widetilde\Lambda_\Delta$ is almost surely $\rho$-regular, hence  the results of previous sections can be applied. 
Thus, all theorems lead to corollaries that hold almost surely for the random sequences  $\widetilde\Lambda_\Delta$.
In particular, the following corollary is true.
\begin{corollary}
 Let  $\rho\in\mathbb N$, and  let a probabilistic measure $\Delta$ on $[0,2\pi)$ have zero $\rho$-th moment.
Given a positive sequence $\Lambda_{\mathbb R}$   with density $\mathcal D$,  its randomization $\widetilde \Lambda_{\Delta}$ is almost surely
\begin{itemize}

    \item[-] a zero set of $\mathcal F_\rho$, in case $\mathcal DR_\Delta^*<1;$
      \item[-] not a zero set of $\mathcal F_\rho$ in case
    $\mathcal DR_\Delta^*>1.$
    \end{itemize}       
\end{corollary}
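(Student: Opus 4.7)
The plan is to combine Corollary \ref{Cor1}, the lemma just proved, and the inclusions $\bigcup_{\sigma<1}\mathcal E_{\rho,\sigma}\subset \mathcal F_\rho\subset \mathcal E_{\rho,1}$ stated at the beginning of this section. First I would assemble almost-sure $\rho$-regularity of $\widetilde\Lambda_\Delta$: the Khinchine--Kolmogorov argument preceding the lemma gives the Lindel\"of condition \eqref{Lind} almost surely, while the lemma itself supplies an angular density equal to $\mathcal D\cdot\Delta$ almost surely. Outside a null event, $\widetilde\Lambda_\Delta$ is thus a $\rho$-regular set with $\Delta_{\widetilde\Lambda_\Delta}=\mathcal D\Delta$, which is the hypothesis needed for the tools of Section 3.

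Next I would compute the critical zero set type. The explicit integral formula for $h_\Delta$ is linear in $\Delta$, so $h_{\mathcal D\Delta}=\mathcal D\cdot h_\Delta$, and consequently the $\rho$-balanced modifications satisfy $\widehat h_{\mathcal D\Delta}=\mathcal D\cdot\widehat h_\Delta$, which yields $R^*_{\mathcal D\Delta}=\mathcal D\cdot R^*_\Delta$. Applying Corollary \ref{Cor1} to the $\rho$-regular set $\widetilde\Lambda_\Delta$ gives, almost surely,
\[
\sigma_Z(\widetilde\Lambda_\Delta)=R^*_{\mathcal D\Delta}=\mathcal D\cdot R^*_\Delta,
\]
and moreover the infimum is attained: there exists an entire function $f_*\in\mathcal E_{\rho,\mathcal D R^*_\Delta}$ with $f_*^{-1}\{0\}=\widetilde\Lambda_\Delta$.

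Finally I would use the Fock-space sandwich to convert this into a zero-set statement in $\mathcal F_\rho$. If $\mathcal D R^*_\Delta<1$, then $f_*$ lies in $\mathcal E_{\rho,\sigma}$ for some $\sigma<1$, hence in $\mathcal F_\rho$, so $\widetilde\Lambda_\Delta$ is a zero set of $\mathcal F_\rho$ almost surely. If $\mathcal D R^*_\Delta>1$, then $\sigma_Z(\widetilde\Lambda_\Delta)>1$, which by the definition of $\sigma_Z$ excludes any $f\in\mathcal E_{\rho,1}$ with $f^{-1}\{0\}=\widetilde\Lambda_\Delta$; since $\mathcal F_\rho\subset\mathcal E_{\rho,1}$, the same negation transfers to $\mathcal F_\rho$. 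There is no real obstacle: everything is a translation of Corollary \ref{Cor1} via the tail-event observation (which guarantees the conclusion holds on a full-measure set), and the mildly non-trivial bookkeeping is the linearity $R^*_{c\Delta}=c R^*_\Delta$ together with the sharpness part of Corollary \ref{Cor1}, which is exactly what lets us produce the witness $f_*$ in the case $\mathcal D R^*_\Delta<1$.
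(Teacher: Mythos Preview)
Your proposal is correct and follows exactly the approach the paper intends: the paper does not give a separate proof of this corollary but simply remarks that, once almost-sure $\rho$-regularity of $\widetilde\Lambda_\Delta$ with angular density $\mathcal D\cdot\Delta$ is established, ``all theorems lead to corollaries that hold almost surely,'' and the present statement is the instance of Corollary~\ref{Cor1} combined with the inclusions $\bigcup_{\sigma<1}\mathcal E_{\rho,\sigma}\subset \mathcal F_\rho\subset \mathcal E_{\rho,1}$. You have correctly spelled out the two bookkeeping points the paper leaves implicit, namely the homogeneity $R^*_{\mathcal D\Delta}=\mathcal D\,R^*_\Delta$ and the use of the attained-minimum clause in Corollary~\ref{Cor1} to produce the witness $f_*$ when $\mathcal D R^*_\Delta<1$.
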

The critical case $\mathcal DR_\Delta^*=1$ here is more subtle and requires a special consideration. For the uniform randomization  some examples of a.s. zero sets of critical density and a.s. uniqueness sets of critical density for $\mathcal F_2$ can be found in \cite{AK}.

{
Now, shifting focus to the difference between $\sigma_U$ and $\sigma_Z$ and referring to Theorem \ref{T4} and Examples 2-4, we arrive at the following corollary.

\begin{corollary}
For every $\rho\in(1/2,\infty)\setminus\{1\}$, there exists a probability measure $\Delta$ satisfying condition \eqref{moment}, and a  nonempty open interval $J_{\rho,\Delta}$ such that for any positive  sequence $\Lambda_{\mathbb R}$ with density $\mathcal D\in J_{\rho,\Delta}$, its  randomization $\widetilde \Lambda_\Delta$ is almost surely a non-zero set and a non-uniqueness set for the Fock space $\mathcal F_\rho.$
\end{corollary}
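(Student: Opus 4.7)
The plan is to combine the explicit examples of Section 4 with the randomization framework developed in this section, using the sandwich
$$\bigcup_{\sigma<1}\mathcal E_{\rho,\sigma}\subset\mathcal F_\rho\subset\mathcal E_{\rho,1}$$
and the scaling of $\sigma_U$ and $\sigma_Z$ under dilation of the angular density.

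First, for the given $\rho\in(1/2,\infty)\setminus\{1\}$, I would choose $\Delta$ as a normalization of one of the measures from Examples 2--5: concretely, take $\Delta=\delta_\pi$ for $\rho\in(1/2,1)$, $\Delta=\tfrac12\Delta_4$ for $\rho\in(1,\infty)\setminus\mathbb N$, $\Delta=\tfrac13\Delta_3$ for $\rho=2$, and $\Delta=\tfrac14\Delta_2$ for integer $\rho\ge 3$. A direct evaluation of roots-of-unity sums confirms the moment condition \eqref{moment} in the integer cases. By the computations of Examples 2--5 together with Theorem \ref{T4}, these measures come equipped with constants $0<a<b$ (depending only on $\rho$ and $\Delta$) such that for any $\rho$-regular set $\Lambda_1$ with $\Delta_{\Lambda_1}=\Delta$ we have
$$\sigma_U(\Lambda_1)\le a<b=\sigma_Z(\Lambda_1).$$

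Next, I would invoke the lemma of this section: the randomization $\widetilde\Lambda_\Delta$ is almost surely $\rho$-regular with angular density $\mathcal D\cdot\Delta$. Since the correspondence $\Delta\mapsto h_\Delta$ is linear, $h_{\mathcal D\Delta}=\mathcal D\, h_\Delta$, and a substitution $k\mapsto\mathcal D k$ in the expressions from Theorems \ref{T1} and \ref{T3} yields the scaling identities
$$\sigma_Z(\widetilde\Lambda_\Delta)=\mathcal D\cdot b,\qquad \sigma_U(\widetilde\Lambda_\Delta)\le \mathcal D\cdot a\quad\text{almost surely.}$$
Now set $J_{\rho,\Delta}:=\bigl(\tfrac{1}{b},\tfrac{1}{a}\bigr)$, which is nonempty since $a<b$. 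For $\mathcal D\in J_{\rho,\Delta}$ we have $\mathcal D a<1<\mathcal D b$, so almost surely $\sigma_Z(\widetilde\Lambda_\Delta)>1$ and $\sigma_U(\widetilde\Lambda_\Delta)<1$. The first inequality, together with $\mathcal F_\rho\subset\mathcal E_{\rho,1}$, forbids any nonzero function in $\mathcal F_\rho$ from having zero set exactly $\widetilde\Lambda_\Delta$; the second inequality provides some $\sigma<1$ and a nonzero $f\in\mathcal E_{\rho,\sigma}\subset\mathcal F_\rho$ vanishing on $\widetilde\Lambda_\Delta$. Hence $\widetilde\Lambda_\Delta$ is almost surely neither a zero set nor a uniqueness set for $\mathcal F_\rho$, as required. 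The only mildly technical points are the verification of the moment condition for the normalized measures in the integer cases and the scaling of $\sigma_U,\sigma_Z$, both of which reduce to short direct computations; no genuine obstacle is expected.
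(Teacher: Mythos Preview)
Your proposal is correct and is precisely the argument the paper has in mind: it merely states that the corollary follows from Theorem~\ref{T4} and Examples~2--5 and then illustrates the case $\rho=2$ in Example~7, where the interval $(3\sqrt{3}/(2\pi),\,3/\pi)$ is exactly your $(1/b,1/a)$ for $\Delta=\tfrac13\Delta_3$. Your explicit verification of the scaling $h_{\mathcal D\Delta}=\mathcal D\,h_\Delta$ (and hence of $\sigma_Z$ and $\sigma_U$) and of the moment condition in the integer cases simply fills in details the paper leaves to the reader.
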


\begin{samepage}
\paragraph{\bf Example 7.}
Put $\Lambda_{\mathbb R}=\{\sqrt \frac nD\},$ and let $\Delta=\frac13 \Delta_3,$ where $\Delta_3$ is defined as in Example 3. Then the random set $\widetilde \Lambda_\Delta$ is almost surely
\begin {itemize}
\item a zero set of $\mathcal F_2$ if
$D<\dfrac{3\sqrt 3}{2\pi};$
\item a non-zero and non-uniqueness set of $\mathcal F_2$ if
$\dfrac{3\sqrt 3}{2\pi}<D<\dfrac{3}{\pi};$
\nopagebreak
\item a uniqueness set of $\mathcal F_2$ if
$D>\dfrac{3}{\pi}.$

\end{itemize}
\end{samepage}
}

\subsection*{Acknowledgements}
The author is deeply grateful to Alexander Borichev and Mikhail Sodin for their kind support, guidance in structuring the material and insightful comments; to  Evgeny Abakumov for 	valuable discussions and suggestions on the topic; and to the anonymous reviewers for careful reading of the manuscript and constructive feedback.	

The author expresses sincere gratitude for the support provided  by Israel Science Foundation grant  No. 1288/21, and by the Center for Integration in Science of the Israel's Ministry of Aliyah and Integration.

The author also sincerely thanks Tel Aviv University for the inspiring and genuinely supportive academic atmosphere during her time there, which played an important role in the development of this work.

\bigskip

\bigskip
\medskip

\end{document}